\newtheorem{thm}{Theorem}[section]
\newtheorem{lem}[thm]{Lemma}
\newtheorem{rem}{Remark}[section]
\newtheorem{defn}{Definition}[section]
\numberwithin{equation}{section}
\def\Pb{\ifmmode{\Bbb P}\else{$\Bbb P$}\fi}
\def\Z{\ifmmode{\Bbb Z}\else{$\Bbb Z$}\fi}
\def\C{\ifmmode{\Bbb C}\else{$\Bbb C$}\fi}
\def\R{\ifmmode{\Bbb R}\else{$\Bbb R$}\fi}
\def\S{\ifmmode{S^2}\else{$S^2$}\fi}
\def\S{\cal S}
\newenvironment{pf}{\paragraph{Proof:}}{\hfill$\square$ \newline}
\begin{document}

\title[On eternal flows]{A classification result for eternal mean convex flows of finite total curvature type} 
\begin{abstract} In this article we partially classify the space of eternal mean convex flows in $\R^3$ of finite total curvature type, a condition implied by finite total curvature. In particular we show that topologically nonplanar ones must flow out of a catenoid in a natural sense. \end{abstract}
\author{Alexander Mramor}
\address{Department of Mathematics, University of Oklahoma, Norman, OK 73019, USA}
\email{amramor@ou.edu}

\maketitle

\section{Introduction}

Considering that in a certain vague sense the mean curvature flow, as the analogue of the heat equation in submanifold geometry, should improve the geometry of its initial data one might expect that the assumption of a flow being eternal should be very strong. Still, the space of eternal mean curvature flows is very large, including minimal surfaces and translators neither space of which is classified completely, so for a more conclusive result more assumptions are needed. With extra assumptions including on the mean convexity and total curvature of the flow our result is:
\begin{thm}\label{mainthm} Let $M_t \subset \R^3$, $t \in \R$, be a complete, embedded, connected eternal mean curvature flow of finite entropy such that:
\begin{enumerate}
 \item (Strict) mean convexity: $H > 0$ along $M_t$. 
 \item $M_t$ is of finite total curvature type, in the sense that:
 \begin{enumerate} 
 \item $M_t$ is homeomorphic to a compact surface with a finite number of punctures. 
 \item There exists a uniform constant $C > 0$ such that for each $R > 0$, there exists $t_R < 0$ so that $\int_{B(0,R) \cap M_s} |A|^2 < C$ for all $s < t_R$. 
 \end{enumerate}
  \end{enumerate}
 Then either $M_t$ is an annulus and flows out of a catenoid, in that as $t \to -\infty$ $M_t$ converges to a catenoid from the outside, or is homeomorphic to $\R^2$. 
\end{thm}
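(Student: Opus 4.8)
The plan is to analyze the backward-in-time limit of the flow, show that it is a complete, embedded, minimal surface of finite total curvature, and let its classification force the stated dichotomy.

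First I would extract a backward limit. Fixing $R>0$, the uniform bound $\int_{B(0,R)\cap M_s}|A|^2 < C$ for $s<t_R$ from hypothesis (2b), together with the area bounds supplied by finite entropy, yields local smooth subconvergence (after passing to a sequence $s_k \to -\infty$) of $M_{s_k}$ to a limit surface $\Sigma$ on $B(0,R)$; a diagonal argument over $R\to\infty$ produces a complete limit $\Sigma$. To see that $\Sigma$ is minimal I would use that along the flow $\frac{d}{dt}\mathrm{Area}(M_t \cap B(0,R)) = -\int_{M_t\cap B(0,R)} H^2 + (\text{flux through }\partial B(0,R))$; since the area is controlled by the entropy and the flux terms are controlled by (2b), integrating in time forces $\int_{-\infty}^{0}\int_{M_t\cap B(0,R)} H^2\,dt < \infty$, so $H \to 0$ along a subsequence of times and $\Sigma$ is minimal. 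The $L^2$ bound on $|A|$ passes to $\Sigma$, and via the Gauss equation $K = -\tfrac12|A|^2$ this is exactly finite total curvature; by the Osserman--Schoen structure theory $\Sigma$ is then conformally a compact Riemann surface with finitely many punctures, each end asymptotic to a plane or a catenoid.

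Next I would bring mean convexity to bear on $\Sigma$. Since $H>0$ evolves by $\partial_t H = \Delta H + |A|^2 H$ along the flow, normalizing $H$ and passing to the limit should produce a function $u \ge 0$, $u\not\equiv 0$, on $\Sigma$ satisfying $Lu = \lambda u$ with $L = \Delta + |A|^2$ and $\lambda \ge 0$, the positive sign of $u$ identifying $\lambda$ as the top of the spectrum. If $\lambda \le 0$ then $\Sigma$ carries a positive Jacobi field and is stable, hence a plane by the Fischer-Colbrie--Schoen/do Carmo--Peng theorem; if $\lambda>0$ then $\Sigma$ has index one, and a complete, embedded, finite-total-curvature minimal surface of index one is a catenoid by the L\'opez-Ros classification. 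The remaining work is to exclude higher genus and three-or-more ends: here I would combine the index bound with embeddedness and the entropy bound, noting that any admissible competitor other than the plane and the catenoid carries strictly larger entropy than the flow permits.

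Finally I would transfer the conclusion back to $M_t$. Multiplicity-one smooth backward convergence identifies the topology of $M_t$ with that of $\Sigma$, giving $M_t \cong \mathbb{R}^2$ in the plane case and $M_t$ an annulus in the catenoid case; in the latter, for $t$ very negative $M_t$ is a small normal graph over the catenoid, and the sign of $H$ (mean convexity fixing the flow direction as outward-pointing) shows $M_t$ lies outside the solid catenoid and converges to it from the outside, as claimed. The main obstacle I anticipate is the rigidity of the previous paragraph: controlling the backward convergence sharply enough to guarantee it is multiplicity one and topology-preserving, and converting the sign of $H$ into a genuine index-at-most-one constraint on $\Sigma$ while ruling out the higher-genus and multiply-ended minimal surfaces. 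The finite total curvature and entropy hypotheses are precisely what should make this possible, but extracting a sharp eigenvalue statement for the limiting Jacobi field is the delicate point.
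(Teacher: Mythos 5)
Your overall strategy --- classify the backward limit and read the theorem off from it --- captures only half of the paper's argument, and the half you omit is the one that makes the theorem work. The paper's proof has an essential \emph{forward}-in-time component: by White's theory, the forward limit of a mean convex flow is empty or a union of stable minimal surfaces (hence planes), from which the paper deduces that $\pi_1(H_t^c)$ is trivial (where $H_t$ is the region the flow moves into), then via Dehn's lemma and the half-lives-half-dies lemma that $M_t$ is a punctured \emph{sphere} whose complement is a ball with solid half-cylinders attached, and finally that $M_t$ has at most two ends. Your proposal has no substitute for this, and your final step --- that ``multiplicity-one smooth backward convergence identifies the topology of $M_t$ with that of $\Sigma$'' --- is false as stated: the convergence is only on compact sets, so genus and ends of $M_t$ can escape to spatial infinity, and nothing in your argument prevents, say, a genus-two $M_t$ from converging locally to a catenoid. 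Conversely, when the backward limit is empty (as for the bowl soliton) or a union of planes, your method says nothing about $M_t$, whereas the theorem requires the conclusion $M_t \cong \R^2$ precisely in those cases; the paper gets this from the forward topological lemmas together with a barrier/linking argument showing that a flow with two or more ends has a nonempty, multiply-ended backward limit. You also acknowledge but do not prove multiplicity one of the backward convergence; the paper's longest lemma is devoted exactly to this, using connectedness, strict mean convexity, set monotonicity of $H_t$, Simon's sheeting theorem and White's expanding-hole theorem --- it is not a technicality that can be deferred.

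Second, the spectral mechanism you propose for the plane/catenoid dichotomy does not work. The normalized mean curvature does not converge to an eigenfunction of the Jacobi operator: it satisfies $\partial_t u = \Delta u + |A|^2 u$, so any limit solves the \emph{parabolic} Jacobi equation on $\Sigma$, and extracting a genuine eigenfunction would require exponential-rate (Merle--Zaag or Lojasiewicz type) convergence that you have not established. Worse, even granting a positive solution $u > 0$ of $Lu = \lambda u$ with $\lambda > 0$, this does not imply index one: by Allegretto--Piepenbrink, positivity of $u$ only bounds the bottom of the spectrum of $-L$ below by $-\lambda$, which carries no index information at all (your $\lambda \le 0$ branch, giving stability and hence a plane, is the only sound one). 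Your fallback --- that competitors of higher genus or with three or more ends ``carry strictly larger entropy than the flow permits'' --- is vacuous here, because the hypothesis is only that the entropy is finite, with no quantitative bound to compare against. The paper's route avoids all of this: genus zero of $M_t$ (from the forward analysis) passes to the limit by the multiplicity-one smooth convergence, the halfspace theorem reduces matters to a connected nonflat limit or a disjoint union of planes, the union of planes is excluded by a homotopy argument in $H_{-\infty}$, and then the genus-zero Lopez--Ros theorem (not the index-one classification) delivers the catenoid.
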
  
By convergence to a catenoid ``from the outside'' we mean specifically that $\lim\limits_{t \to -\infty} M_t$ is a catenoid $C$ and that for all times $M_t$ lays in the complement of the solid cylinder $C$ bounds. The convergence will be uniform in smooth topology (with multiplicity one) on compact domains. The finite entropy assumption is equivalent to uniform polynomial volume growth and in the context of the mean curvature flow is a fairly ubiquitous background assumption -- here in particular we mean it's bounded independent of time. By \cite{W2}, indeed any type of assumption on total curvature along $M_t$ that ensures it's of finite total curvature for $t$ sufficiently negative (such as a uniform bound, for instance) implies it is of ``finite total curvature type'' as described above  -- minimality is not needed in giving condition (a). 
$\medskip$

As is well known, in the mean curvature flow mean convexity is a very powerful assumption and under the stronger condition of $\alpha$--noncollapsedness such eternal, hence ancient, flows must be convex, and for these there are a number of very strong classification results (for instance \cite{CHH}). Of course we do not assume noncollapsedness in this work. The finite total curvature type assumption simplifies the topology of the flows in question as well as aids in applying compactness results and understanding the associated limit. Such an eternal mean curvature flow out of a catenoid as indicated above, dubbed the ``reapernoid,'' was constructed earlier in \cite{MP} for all $n \geq 2$. Indeed, considering the construction in Theorem 1.3 of \cite{MP} the mean convexity of the eternal solution is obvious and the finite total curvature type assumption above also follows easily. Note that the statement above, unfortunately, does not say anything about the uniqueness of the flow out of the catenoid and imaginably there could be several; note under some symmetry conditions in \cite{MP} we prove some partial uniqueness statements in the same spirit as the present one, but here we are in a more general case. Nor does it say anything about the geometry of $M_t$ in the case it is homeomorphic to the plane. These issues and more are discussed in the concluding remarks below.  
$\medskip$

Concerning ancient flows, it was also shown in \cite{MP} that there are ancient mean convex flows out of all asymptotically flat minimal hypersurfaces, in particular out of all minimal surfaces of finite total curvature because the ends are asymptotically planar or catenoidal. These flows are readily seen to have finite total curvature type in the sense above as well -- we note this result was recently generalized in \cite{Han}. In particular the space of all ancient mean convex flows of finite total curvature type is as complicated as the space as finite total curvature minimal surfaces. This space, as the Costa--Hoffmann--Meeks examples show \cite{Cos, HM1}, is very large so that in this setting the assumption of a flow being eternal is much stronger than merely being ancient. 
$\medskip$

Now we give a sketch of the argument, where throughout $M_t$ satisfies the assumptions of the theorem. First, one can see that the topology of $M_t$ is a punctured sphere. To see this, denoting the region $M_t$ flows into by $H_t$, which is well defined by mean convexity, we first obrserve that $\pi_1(H_t^c)$ is trivial by White's theory for mean convex flows \cite{W}, namely because it says that the forward time limit of the flow is either empty or a collection of disjoint flat planes. As a consequence, we then show that $H_t^c$ is given by a ball with some number of solid half cylinders glued on it. If there is only one solid half cylinder attached then $M_t$ is topologically a plane, although of course there may be more than one such cylinder attached. 
$\medskip$

In the second case, where $M_t$ has more than one end, one can see $\lim\limits_{t \to -\infty} M_t = N$ is nonempty and is a (a priori possibly disconnected) minimal surface of finite total curvature, which by the halfspace theorem we see either is connected and nonflat or is the disjoint union of a number of flat planes. By a result of Lopez and Ros in the first case $M_{-\infty}$ must be a catenoid. The case $M_{-\infty}$ is a disjoint union of planes can be ruled out by a quick topological argument. Again, the other case is that $M_t$ is topologically a plane. We do not further classify the eternal flows in this case (and this space is certainly nonempty) but discuss some hints that, under a stronger true finite total curvature condition, there might in fact be none in the concluding remarks.
$\medskip$

$\textbf{Acknowledgements:}$ The author thanks Niels Martin M{\o}ller for his interest and encouragement in this work. In the course of preparing the article, he was supported by CPH-GEOTOP-DNRF151 from the Danish National Research Foundation, CF21-0680 from the Carlsberg Foundation (via GeoTop and N.M. M{\o}ller respectively) and is grateful for their assistance.

 \section{Preliminaries on the mean curvature flow}
In this section we give a nonexhaustive account of some of the results and notions in this paper we will use below, primarily focusing on those concerning the flow. We start with the most basic definition:  
 \begin{defn} A (smooth) mean curvature flow of embedded hypersurfaces in $\R^{n+1}$ is given by a manifold $M^n$ and a family of embeddings $F: M \times I \to \R^{n+1}$ satisfying
 \begin{equation}
 \frac{dF(x,t)}{dt} = -H \nu
 \end{equation}
where $I \subseteq \R$ is some nonempty interval.
\end{defn}
 By a small abuse of notation one often denotes the image of $F$ at a fixed time $t$ by $M_t$. In the following we discuss some especially pertinent properties of the flow, a more complete introduction can be found in \cite{Mant}. 
 $\medskip$
 
  A flow is \textbf{eternal} if $I = \R$, and \textbf{ancient} when at least $I = (-\infty, T]$ for some $T > -\infty$. Under fairly loose hypotheses the mean curvature flow exists for at least a short time from a given initial smooth hypersurface, for instance compactness or uniformly bounded curvature, but producing ancient (or eternal) mean curvature flows is more delicate one reason being that singularities occur often along the flow. 
 $\medskip$
 
 One natural way they appear is in the analysis of these singularities; we say that a flow $M_t$ develops a singularity at a point $(p, t)$ when $|A|^2(p_i, t_i) \to \infty$ for a sequence $(p_i, t_i) \to (p, T)$. To study these singularities one parabolically rescales the flow along such a sequence by factors $\lambda_i \to 0$ and, when appropriate weak assumptions are met to employ compactness theorems are satisfied (see the discussion of Brakke flows below, for example) one will find in a subsequential limit an ancient, possibly eternal, limiting flow which can be thought of as models of the singularity. Of course, one would expect to naturally encounter mean convex ancient or eternal flows as models of singularities along a mean convex mean curvature flow -- although oftentimes (such as for compact mean convex initial data) such flows will be noncollapsed so the singularities will in fact be convex. See \cite{HK0,HK, CHH} for more about the noncollapsing assumption and its consequences. The (nontrivial) eternal singularity models one finds in these cases will be bowl solitons which are topologically planar and in particular none flow out of a catenoid. On the other hand as mentioned above the mean curvature flow is, as well known, the natural analogue of the heat equation so in this sense ancient and eternal flows, as analogues of complete solutions to the heat equation, are arguably interesting regardless of whatever convexity condition or soliton equation they may satisfy or not.
$\medskip$

It is a consequence of the \textbf{comparison principle}, which says that two disjoint hypersurfaces where at least one of them is compact must stay disjoint under the mean curvature flow, that singularities occur often. In particular, in comparison with the bowl soliton \textbf{there can be no compact eternal flows}. Due to the ubiquity of singularities weak notions of the flow are crucial in many situations, and there are a number of ways to define them. A particularly important notion of weak flow is the Brakke flow, a geometric measure theoretic definition which we'll refer to at times in the argument below. These are defined as follows:

\begin{defn} A (n-dimensional integral) Brakke flow is a family of Radon measures $\mu_t$ such that:
\begin{enumerate}
\item For almost every $t \in I$ there exists an integral $n$-dimensional varifold $V(t)$ so that $V(t)$ has locally bounded first variation and has mean curvature vector $\vec{H}$ orthogonal to Tan$(V(t), \cdot)$ a.e. 
\item For a bounded interval $[t_1, t_2] \subset I$ and any compact set $K$,
\begin{equation}
\int_{t_1}^{t_2} \int_K (1 + H^2) d\mu_t dt < \infty
\end{equation}
\item (Brakke inequality) For all compactly supported nonnegative test functions $\phi$, 
\begin{equation}
\int_{V(t_0)} \phi d\mu_{t_0}  \leq \int_{V(0)} \phi d\mu_0 + \int_0^{t_0} \int_{V(t)} \phi H^2 - H \langle \nabla \phi, \nu \rangle - \frac{d\phi}{dt} d\mu_t dt
\end{equation} 
\end{enumerate}
\end{defn}

The study of Brakke flows itself is quite rich, a comprehensive introduction to Brakke flows can be found in \cite{Ton}, but let's briefly discuss a couple facts which will be needed about them. First is \textbf{Brakke's compactness theorem}, which says that for a sequence of Brakke flows with uniform area bounds on parabolic cylinders one may exact a converging subsequence. A second important fact is \textbf{Brakke's regularity theorem}, which says that for Brakke flows with density bounds in a backwards parabolic neighborhood sufficiently close to 1 will be smooth with bounded curvature in a smaller neighborhood. There are related facts for varifolds, \textbf{Allard compactness} and \textbf{Allard regularity} respectively, under analogous assumptions which we will also refer to; see \cite{LS}. Although in this paper we only consider smooth flows, because our assumptions generally aren't strong enough to employ, for instance, Arzela--Ascoli compactness we need to consider these wider classes of objects. 
$\medskip$

The \textbf{entropy} $\lambda(M_t)$, introduced by Colding and Minicozzi in \cite{CM}, of a flow $M_t$ is defined as the supremum over recentered and rescaled Gaussian functionals:
\begin{equation}
\lambda(M_t) = \sup\limits_{x\in \R^3,r\in(0,\infty)} (4 \pi r)^{-n/2} \int_\Sigma e^{-|x - x_0|/4r} d\mu(x)
\end{equation}
This is a quantity that has been of intense recent interest and has many useful properties, but the only point in stipulating the flows $M_t$ in theorem \ref{mainthm} above have bounded entropy is that it provides a compact means of writing that $M_t$ has time independent area bounds to employ the compactness results mentioned above and is a commonly made background assumption. 
$\medskip$

As mentioned in the introduction, the flow is well behaved under a mean convexity assumption, that is $H > 0$. From its evolution equation, 
 \begin{equation}
\frac{dH}{dt} = \Delta H + |A|^2 H
 \end{equation}
one sees by the maximum principle it is typically preserved -- at least in the compact case and for noncompact ones under some extra assumptions as well (in our setting we are just assuming mean convexity at all times). The mean convex mean curvature flow has been considered by many authors. Particularly important in our applications is the work of B. White \cite{W}. We will use a number of his results from this article, a result one can see from sections 10, 11 of his paper we wish to highlight is the following:
\begin{thm}\label{White_thm1} Suppose that $M_t \subset \R^{3}$ is a mean convex mean curvature flow defined on an interval $(T, \infty)$, where $T < \infty$. Then $\lim\limits_{t \to \infty} M_t$ is either empty or a stable minimal hypersurface $M_\infty$, possibly with multiple components.  The convergence is smooth away from the singular set of $M_\infty$. 
\end{thm}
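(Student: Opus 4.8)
The plan is to exploit the monotone structure of a mean convex flow and then import the local regularity theory that \cite{W} develops. First I would record that mean convexity holds throughout by hypothesis (and is in any case preserved by the maximum principle applied to $\partial_t H = \Delta H + |A|^2 H$). Since the flow moves strictly in the $-\nu$ direction wherever $H>0$, the open regions $\Omega_t$ into which $M_t$ advances are strictly nested and decreasing in $t$; by the avoidance (comparison) principle the time slices are pairwise disjoint and foliate a region of spacetime. I would then set $\Omega_\infty := \bigcap_{t}\Omega_t$ and take $M_\infty := \partial\Omega_\infty$ as the candidate limit. This is automatically well defined as a set-theoretic limit, independent of any subsequence, and the dichotomy in the statement is already visible here: either $\Omega_\infty$ has empty interior, in which case the flow sweeps everything out and the limit is empty, or it is a genuine region whose boundary will be the minimal surface. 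The well-definedness of $\Omega_\infty$ is exactly what upgrades subsequential convergence to honest convergence at the end.

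Next I would show the limit is stationary. The first variation of area gives $\frac{d}{dt}\Area(M_t \cap U) \le -\int_{M_t \cap U} H^2 + (\text{boundary flux through }\partial U)$, so after localizing in a fixed ball $B_R$ and integrating in time one obtains $\int_{t_0}^{\infty}\int_{M_t \cap B_R} H^2\,dt < \infty$. Hence there is a sequence $t_i \to \infty$ with $\int_{M_{t_i}\cap B_R} H^2 \to 0$. Passing to the associated Brakke flows and applying Brakke--Ilmanen compactness, a subsequence converges to an integral varifold $V$; the energy decay forces $H \equiv 0$ on $V$, so $V$ is stationary, and by Huisken's monotonicity formula together with White's local regularity theorem for mean convex flows, $V$ is a smooth minimal surface away from a singular set of the expected small dimension, with the convergence smooth and multiplicity one there. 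Because mean convexity passes to the limit and the time slices foliate $\Omega_\infty$ from one side, the regular part of $V$ coincides with the smooth boundary of the set-theoretic $M_\infty$, removing the dependence on the sequence $t_i$.

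Finally I would establish stability. The finite-time slices $M_t$ foliate a one-sided neighborhood of the regular part of $M_\infty$ by pairwise disjoint surfaces converging to it; writing this foliation as normal graphs over $M_\infty$ produces a positive function $u$ on the regular part satisfying $Lu \le 0$ for the stability operator $L u = \Delta u + |A|^2 u$ (a positive supersolution, since the $M_t$ carry $H>0$ while the limit has $H=0$). The existence of such a positive supersolution forces the second variation to be nonnegative, i.e. $M_\infty$ is stable, by the standard Fischer-Colbrie--Schoen criterion. The main obstacle is the middle step: the passage from weak varifold convergence to smooth, multiplicity-one convergence away from a controlled singular set is the technical heart of the theory, resting on Huisken monotonicity, the classification of mean convex tangent flows, and the stratification of the singular set. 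This is precisely what sections 10--11 of \cite{W} supply, and none of it is elementary; a secondary but genuine point in our eventual noncompact application is controlling the behavior at infinity so that the localized energy and compactness arguments assemble into a global statement.
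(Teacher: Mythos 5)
The paper offers no proof of this statement at all: it is imported verbatim as a result of B.~White, with everything deferred to sections 10 and 11 of \cite{W}. So there is no internal argument to compare against, and the right question is whether your sketch stands on its own. It does not, and you acknowledge this yourself: the passage from subsequential varifold convergence to smooth, multiplicity-one convergence away from a small singular set --- which is essentially the entire content of the theorem --- is handled in your proposal by citing White's local regularity, the classification of mean convex tangent flows, and the stratification theory, i.e.\ exactly the material of sections 10--11 of \cite{W}. As a standalone proof this is circular (you are invoking the theorem's own source at its key step), but it has the same epistemic status as the paper itself, and read as a reconstruction of how White's argument is organized your outline is accurate.

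Two points of substance are worth recording. First, your peripheral scaffolding is genuinely correct and useful: the monotone set-theoretic limit $\Omega_\infty=\bigcap_t \Omega_t$ is precisely what makes the limit sequence-independent and upgrades subsequential to full convergence (the paper reuses this device for the \emph{backwards} limit in its Lemma 3.5), and your stability argument --- writing the one-sided foliation by the $M_t$ as normal graphs and extracting a positive supersolution of $\Delta u+|A|^2u\le 0$, then invoking the Fischer--Colbrie--Schoen criterion \cite{FCS} --- is a legitimate alternative to the route the paper's discussion emphasizes, namely that the forward limit is a one-sided area minimizer in every bounded domain and is therefore stable. (Your version needs a renormalization plus Harnack step to ensure the limiting supersolution is positive rather than vanishing, which you elide.) Second, a caveat on your energy estimate: localizing $\frac{d}{dt}\mathrm{Area}(M_t\cap B_R)\le -\int_{M_t\cap B_R}H^2 + \mathrm{flux}$ and integrating in time only yields $\int_{t_0}^\infty\int_{M_t\cap B_R}H^2<\infty$ once one has uniform local area bounds to control the flux term; for a noncompact mean convex flow these are not free and are themselves part of White's theory (via the one-sided minimization property), so even this step quietly leans on \cite{W}.
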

Of course since we are considering minimal surfaces in $\R^3$ one can further conclude the convergence above will be smooth, although it may be with multiplicity. In particular, the one sided minimization property of mean convex mean curvature flows gives that the limiting set, if nonempty, will be smooth and stable within any bounded domain which says the limiting set in totality is stable.
$\medskip$

Lastly, a useful way to characterize mean convex flows is in terms of the corresponding evolution of the sets they bound -- because $M_t$ is mean convex it bounds a set $H_t$ which, in terms of set inclusion, is monotonically decreasing as $t$ increases. Strictly so in fact, because $H$ is assumed to be strictly positive. Because $M_t$ is assumed connected it and its complement will be connected as well. We will refer to this characterization often in the arguments below.

\section{Proof of theorem}

In this section, $M_t$ satisfies the assumptions of theorem \ref{mainthm}. The first lemma follows quickly from the discussion above. 
\begin{lem}\label{longterm} The set $\lim\limits_{t \to \infty} M_t$ is either empty or diffeomorphic to a disjoint union of planes. In the latter case the convergence is smooth in bounded domains with multiplicity one. 
\end{lem}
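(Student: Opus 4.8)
The plan is to feed the forward limit into the structure theorem for mean convex flows recalled in the previous section (White's theorem) and then to identify the resulting stable minimal surface. First I would apply that theorem to $M_t$ on any forward time interval $(T,\infty)$: it yields that $\lim_{t\to\infty} M_t$ is either empty, in which case there is nothing to prove, or a stable minimal surface $M_\infty$, with the convergence smooth away from the singular set of $M_\infty$. Since we work in $\R^3$ and, as noted after that theorem, the one-sided minimization of the flow forces $M_\infty$ to be smooth and stable in every bounded ball, $M_\infty$ is in fact a smooth, properly embedded, complete stable minimal surface; completeness and properness come from the local area bounds supplied by the finite entropy hypothesis together with smooth convergence on compact sets, so no singular set is present and the convergence is smooth everywhere on compacta.

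Next I would invoke the classification of complete stable minimal surfaces in $\R^3$: by the theorem of do Carmo--Peng, Fischer-Colbrie--Schoen and Pogorelov, every such surface is a plane. Applying this component by component shows that $M_\infty$ is a disjoint union of planes, each diffeomorphic to $\R^2$, which gives the first assertion.

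It remains to upgrade the smooth convergence to multiplicity one, which I expect to be the main obstacle. The clean way to see this is through the evolving regions: writing $H_t$ for the region bounded by $M_t$, mean convexity makes the family $\{H_t\}$ monotonically decreasing, so it has a limit $H_\infty = \bigcap_t H_t$ with $M_\infty = \partial H_\infty$. The point is that White's one-sided (outward) minimizing property for $\partial H_t$ passes to the limit, so that $M_\infty$ is realized as the minimizing boundary of a region; such a boundary is a smooth embedded surface counted with multiplicity one, which is exactly the assertion. Concretely, the only way multiplicity could exceed one is for two sheets of $M_t$ to converge to the same plane $P$ while bounding a slab component of $H_t$ whose thickness shrinks to zero as $t\to\infty$ (a collapsing ``pancake''); the hard part is ruling this degeneration out, and this is precisely what the minimizing property, equivalently the non-collapsing of the limit region into a lower-dimensional set, prevents. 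I would stress that the finite entropy hypothesis already bounds the multiplicity a priori, since a plane of multiplicity $m$ has entropy $m$ and entropy is monotone along the flow, but by itself this only caps $m$ and does not force $m=1$; it is the mean convex structure, not the entropy bound, that yields multiplicity one. Assembling these three steps gives both assertions of the lemma.
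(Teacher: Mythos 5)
Your first two steps track the paper's proof closely: White's structure theorem gives that the forward limit is empty or a smooth stable minimal surface (with no singular set, since we are in $\R^3$), and the Fischer-Colbrie--Schoen classification \cite{FCS} then forces each component to be a plane. The genuine gap is in your multiplicity-one step. Your claimed mechanism is that the one-sided (outward) minimizing property passes to the limit and thereby ``prevents'' two sheets from collapsing onto the same plane. That is not true as stated: the boundary of a thin slab is itself one-sided minimizing to the outside (any competitor enclosing the slab must still project onto the plane from both sides, hence has at least comparable area), so outward minimization alone cannot rule out the collapsing pancake. Moreover, your parenthetical ``equivalently the non-collapsing of the limit region into a lower-dimensional set'' is exactly the assertion that needs proof, so the argument is circular at the decisive point; relatedly, the identification $M_\infty = \partial\bigl(\bigcap_t H_t\bigr)$ with multiplicity one is precisely what fails in the pancake scenario, where the varifold limit of $M_t$ carries multiplicity two over a set that is not the boundary of the limit region.

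What the paper uses instead --- and what your argument never invokes --- is the connectedness of $M_t$ together with strict mean convexity. Since $H_t$ is strictly decreasing, each component $P$ of the limit satisfies $P \subset H_t$ for every $t$, so $M_t$ is disjoint from $P$; as $P$ is complete and embedded it separates $\R^3$, and a connected $M_t$ disjoint from $P$ must therefore lie entirely on one side of $P$. This kills the two-sided pancake outright. With this one-sidedness in hand the paper simply cites White's multiplicity bound theorem \cite{W} to get multiplicity one; alternatively, one can note that if two sheets on the same side of $P$ both converged to $P$, the region between them would be a piece of $H_t^c$, and any fixed point of that region would eventually lie above both sheets, i.e.\ in $H_t$, contradicting that $H_t^c$ expands in $t$. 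Either repair --- the connectedness-plus-monotonicity argument, or an honest citation of White's multiplicity-one convergence theorem rather than a rederivation from outward minimization --- is needed; as written, your third step would fail.
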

\begin{pf}
By White's theorem, theorem \ref{White_thm1}, from the section above we know the limit set, which we'll denote here by $M_\infty$, will be either empty or a (possibly disconnected) smooth and stable minimal surface -- considering each component at a time without loss of generality it is connected. If it is empty we are finished so suppose $M_\infty \neq \emptyset$. Since the flow is mean convex we see that $P \subset H_t$ for all $t \in \R$, and because $M_t$ is connected then the flow must lay on one side of $P$. The multiplicity bound theorem of White \cite{W} then says that $M_t$ must converge to $P$ with multiplicity one. Since $M_t$ embedded $M_\infty$ is as well, and it will be closed so complete. Using it is stable and complete then by \cite{FCS} it must be a plane $P$.
\end{pf} 

Using this lemma we can already say a lot about the topology of $M_t$, via its bounded domain(s) -- because $M_t$ is connected there are exactly two connected components, where one must be $H_t$ and the other $H_t^c$. First we show:
\begin{lem}\label{fun_triv} $\pi_1(H_t^c)$ is trivial.
\end{lem}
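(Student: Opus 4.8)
The plan is to deduce the triviality of $\pi_1(H_t^c)$ from Lemma \ref{longterm} by realizing $H_t^c$, up to homotopy, as the complement of the forward limit $M_\infty$.

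First I would record the monotonicity coming from strict mean convexity: since $H > 0$, the regions $H_s$ are strictly nested and decreasing in $s$, so the complements $H_s^c$ form a nested increasing family of open sets all containing $H_t^c$. Set $\Omega := \bigcup_{s \geq t} H_s^c = \R^3 \setminus H_\infty$, where $H_\infty := \bigcap_{s \geq t} H_s$ is the closed region bounded by the forward limit; by construction $\partial H_\infty = M_\infty$, and $\Omega$ is connected, being an increasing union of connected sets all containing $H_t^c$. Next I would identify the topology of $\Omega$ using Lemma \ref{longterm}. If $M_\infty = \emptyset$ then every point is eventually swept into some $H_s^c$, so $H_\infty = \emptyset$ and $\Omega = \R^3$. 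Otherwise $M_\infty$ is a disjoint union of planes; being pairwise disjoint they must be parallel, so each connected component of $\Omega = \R^3 \setminus H_\infty$ is an open half-space or an open slab, and in either case is contractible. Thus the component of $\Omega$ containing $H_t^c$ is simply connected.

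The crux is then to show that the inclusion $H_t^c \hookrightarrow \Omega$ is injective on $\pi_1$; I would in fact argue that $H_t^c$ is a deformation retract of $\Omega$. This is where White's structure theory for mean convex flows \cite{W} enters: the forward flow $\{M_s\}_{s \geq t}$ foliates the swept region $H_t \setminus H_\infty$, the associated arrival-time function is continuous with level sets the $M_s$, and the flow trajectories run from $M_t$ inward, realizing the retraction by sliding each point of $H_t \setminus H_\infty$ back to its originating point on $M_t = \partial H_t^c$. Equivalently, and perhaps more cleanly, any loop $\gamma \subset H_t^c$ bounds a disk in $\Omega$ by the previous paragraph; this disk is compact, hence contained in some $H_s^c$, and the finite-time flow from $t$ to $s$ furnishes a retraction $H_s^c \to H_t^c$ pushing the nullhomotopy back into $H_t^c$. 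Either way $\pi_1(H_t^c) \cong \pi_1(\Omega) = 1$.

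The main obstacle is making this retraction rigorous across the singular times of the flow, since the trajectories and the arrival-time function degenerate at singular points. The essential input is again White's theory, which guarantees that in $\R^3$ the singular set is small -- of parabolic Hausdorff dimension at most one, concentrated at finitely many points and times -- and that the level-set/arrival-time description is tame enough that this exceptional set obstructs neither the construction of the deformation retraction nor the resulting computation of $\pi_1$.
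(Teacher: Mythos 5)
Your proposal is correct and follows essentially the same route as the paper's proof: simple connectivity of the forward-limit complement via Lemma \ref{longterm}, compactness of the spanning disk to place it inside some finite-time complement $H_s^c$, and the smooth-flow homotopy equivalence (retraction) between $H_s^c$ and $H_t^c$ to pull the nullhomotopy back. Your closing worry about singular times is moot, since the flow here is by hypothesis a \emph{smooth} eternal flow, so there are no singular times at all -- this smoothness is precisely what both your retraction and the paper's argument rely on.
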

\begin{pf}
For the sake of concreteness we consider the fate of the flow starting from time $t = 0$: because the flow is smooth for any $s, t \in \R$ we have $H_s^c$ and $H_t^c$ are homotopic. We recall that $H_t$ denotes the set $M_t$ bounds and flows monotonically into. Considering then a homotopically nontrivial curve $\gamma \in H_0^c$, note by the mean convexity of the flow that $H_t^c \subset H_s^c$ for $t < s$ so for $t > 0$ it makes sense to consider $\gamma \subset H_t^c$. Since the flow is smooth, clearly $\gamma$ must not be homotopically trivial in $H_t^c$ for any finite $t > 0$. On the other hand by lemma \ref{longterm} it must be the case that $\gamma$ is homotopically trivial in $H_\infty^c$, or so that it bounds an immersed disc in $H_\infty^c$. Of course the disc lays in a compact set, so because by lemma \ref{longterm} $H_t^c$ converges to $H_\infty^c$ which is uniform in compact sets after perhaps perturbing the disc slightly it must be contained in $H_t^c$ for some potentially large but finite $t$ though, giving a contradiction because the flow is smooth. 
\end{pf}
$\medskip$   

 Recall that by assumption $M_t$ is homeomoprhic to a punctured compact surface $\widetilde{M}$, fixed for any time $t$ since the flow is by assumption smooth. a priori, $M_t$ could have many ends (or, $\widetilde{M}$ could have many punctures), possibly arranged in complicated ways. First we show a result that can be interpreted as saying they are arranged in a simple way; note that when we write ``solid half cylinder'' we mean specifically a domain of $\R^3$ bounded by a capped off half cylinder (i.e. a surface homeomorphic to a plane), and do not further claim this domain is homeomorphic to the domain bounded by a capped off standard round cylinder persay -- the trepidation here due to the fact the asymptotic structure of the ends is not prescribed in the assumptions. Below we fix again for the sake of concreteness $t = 0$:
 $\medskip$
 
\begin{lem}\label{handlebody} $H_0^c$ is homeomorphic to a smooth bounded domain with a finite number of solid half cylinders glued along it. 
\end{lem}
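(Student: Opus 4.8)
The plan is to work at a very negative time and transport the conclusion back, since the flow is a smooth isotopy and so the sets $H_t^c$ are all mutually homeomorphic; it therefore suffices to prove the statement for $t\ll 0$, where the finite total curvature type hypothesis provides geometric control. By condition (b) together with \cite{W2}, the surfaces $M_t$ have, outside a fixed large ball $B(0,R)$, exactly $k$ graphical annular ends $E_1,\dots,E_k$, where $k$ is the number of punctures of $\widetilde{M}$; for $R$ large $M_t$ meets $\partial B(0,R)$ transversally in $k$ circles, and being smooth the ends are in particular tame at infinity. Fix such an $R$ and write $M_t=\Sigma\cup E_1\cup\dots\cup E_k$, where $\Sigma=M_t\cap\overline{B(0,R)}$ is compact with $k$ boundary circles and the $E_i$ are the ends.

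Correspondingly the complement decomposes as $H_t^c=K\cup U_1\cup\dots\cup U_k$, where $K=H_t^c\cap\overline{B(0,R)}$ is the compact core and $U_i$ is the component of $H_t^c\setminus B(0,R)$ bordering $E_i$. After rounding the corners along $\partial B(0,R)$, the core $K$ is a smooth bounded domain, and each $U_i$ is glued to $K$ along a disk $\delta_i\subset\partial B(0,R)$. Since $\partial U_i=E_i\cup_{\partial}\delta_i$ is an annulus capped by a disk, it is homeomorphic to a plane, so the lemma reduces to the single assertion that each $U_i$ is homeomorphic to a solid half cylinder $D^2\times[0,\infty)$.

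The crux is this identification of $U_i$, and the step I expect to be the main obstacle is the extraction above: getting a genuinely tame, graphical description of all $k$ ends \emph{uniformly out to infinity} at a fixed negative time, from a hypothesis that only supplies curvature bounds in fixed balls for very negative times. Granting that structure, the topology is comparatively routine. Because $E_i$ is a tame graphical annulus, $U_i$ is one of the two product regions it cobounds outside $B(0,R)$, and such a region is a solid half cylinder exactly when it is simply connected; the only alternative is that $U_i$ carries a loop $\gamma$ encircling the end. In that case the opposite region lies in $H_t$ and contains a proper ray of the end's axis, so $\gamma$ has nonzero linking number with a curve in $H_t$ and hence cannot bound in $H_t^c$, contradicting $\pi_1(H_t^c)=1$ from Lemma \ref{fun_triv}. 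Thus every $U_i$ is simply connected; since an end-region of $\R^3$ encloses no void its second homology vanishes as well, and therefore each $U_i$ is a solid half cylinder. For a planar end this alternative never arises, as both product sides are already contractible, while for a catenoidal-type end the linking argument is exactly what selects the correct (inner) side.

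Assembling the pieces, $H_t^c$ is the smooth bounded domain $K$ with the solid half cylinders $U_1,\dots,U_k$ attached along the disks $\delta_i$, which is precisely the asserted structure. Transporting back along the isotopy of the flow yields the statement for $H_0^c$.
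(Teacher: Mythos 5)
There is a genuine gap, and it is exactly the one you flagged: your argument needs a tame, graphical description of all the ends out to infinity at some fixed (very negative) time, and the hypotheses cannot supply it. Condition (b) only says that for each fixed radius $R$ there is a time $t_R$ (depending on $R$) before which the curvature integral on $B(0,R)\cap M_s$ is bounded; the times $t_R$ may degenerate to $-\infty$ as $R\to\infty$, so at no single time do you get a global total curvature bound, and hence no license to invoke \cite{W2} for end structure. The bowl soliton illustrates the point: it satisfies the finite total curvature type condition (for fixed $R$ it translates out of $B(0,R)$ as $s\to-\infty$), yet it has infinite total curvature at every time, and nothing in the hypotheses prevents ends that are never graphical or transversal to large spheres. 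The paper is explicit about this: just before the lemma it warns that ``the asymptotic structure of the ends is not prescribed in the assumptions,'' and for precisely this reason it defines ``solid half cylinder'' weakly --- a domain of $\R^3$ bounded by a capped-off half cylinder (a surface homeomorphic to a plane) --- rather than claiming a product structure $D^2\times[0,\infty)$ as you do. Your subsequent reduction (``a solid half cylinder exactly when it is simply connected'') also leans on tameness; without it, identifying a noncompact region of $\R^3$ up to homeomorphism from $\pi_1=1$ and $H_2=0$ is not routine.

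The paper's proof avoids all of this by being purely topological at $t=0$: take the boundary curves $\eta_1,\dots,\eta_k$ of annular neighborhoods of the punctures given by condition (a); these are null-homotopic in $H_0^c$ by Lemma \ref{fun_triv}, so by Dehn's lemma they bound embedded discs in $H_0^c$, which after cutting and pasting can be taken disjoint; the discs together with the end-annuli form disjoint capped-off cylinders (plane-like surfaces), and connectedness of $M_0$ forces the regions of $H_0^c$ they bound to be the asserted solid half cylinders in the paper's weak sense. The key tool your proposal is missing is Dehn's lemma, which converts the group-theoretic input $\pi_1(H_0^c)=1$ into embedded discs without any geometric control of the ends; once you have that, neither the passage to $t\ll 0$ nor the isotopy-transport back to $t=0$ (itself delicate for noncompact surfaces) is needed.
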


\begin{pf} 
By (a) of the finite total curvature type assumption $M_0 \simeq \Sigma \setminus \{p_1, \ldots, p_k\}$, where $\Sigma$ is a compact surface. By ends of $M_0$ we specifically mean a choice of disjoint annuli $B(p_i, r_i) \setminus p_i$ for some small $r_i > 0$, under the homeomorphism above. Denote by $\eta_1, \ldots \eta_k$ the (image of the) boundary curves of these annuli, and the annuli they bound in $\R^3$ by $C_1, \ldots, C_k$. Then by lemma \ref{fun_triv} each of these curves is homotopically trivial in $H_0^c$, and since they are embedded they in fact bounded embedded discs $D_1, \ldots, D_k$ in $H_0^c$ by Dehn's lemma. By standard cutting and pasting arguments these discs can be taken to be disjoint. So, combined with the cylinder $C_1, \ldots, C_k$ they give disjoint capped off cylinders laying in $H_0^c$ (partially on its boundary, of course) bounded by the ends and discs. Because $M_0$ is connected by assumption these cylinders must bound solid cylinders in $H_0^c$, giving the claim. 
\end{pf}
$\medskip$

Of course, lemma \ref{fun_triv} strongly suggests that the compact domain above should be a ball. This is indeed the case, as we show next: 
$\medskip$

\begin{lem}\label{fun_triv2}$H_0^c$ is homeomorphic to a ball with a finite number of solid half cylinders glued along it so in particular, referring to part (a) of the definition of finite total curvature type, $\widetilde{M}$ must be a sphere. 
\end{lem}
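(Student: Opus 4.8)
The plan is to push the information in Lemma \ref{fun_triv} onto the compact piece produced in Lemma \ref{handlebody} and then run a Poincaré--Lefschetz duality argument. Write $H_0^c = W \cup (\text{solid half cylinders})$ as in Lemma \ref{handlebody}, where $W$ is the compact core. Each solid half cylinder deformation retracts onto the disc $D_i$ along which it is attached, so $H_0^c$ deformation retracts onto $W$; in particular, by Lemma \ref{fun_triv}, $H_0^c$ and hence $W$ is simply connected. Next I would identify $\partial W$: removing the end annuli $C_i$ from $M_0$ leaves the compact surface-with-boundary $M_0 \setminus \bigcup_i C_i$ whose boundary circles are the $\eta_i$, and capping these off with the discs $D_i$ reconstitutes $\Sigma$. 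Thus $\partial W$ is homeomorphic to $\widetilde{M} = \Sigma$, a single closed surface, orientable since $W \subset \R^3$ and connected since $M_0$ is.

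The main step is to show $\Sigma$ has genus zero. For this I would apply the ``half lives, half dies'' consequence of Poincaré--Lefschetz duality to the compact orientable $3$--manifold $W$: the image of $i_* \colon H_1(\partial W; \mathbb{Q}) \to H_1(W; \mathbb{Q})$ has dimension exactly $\tfrac{1}{2} \dim_{\mathbb{Q}} H_1(\partial W; \mathbb{Q})$. Since $W$ is simply connected, $H_1(W; \mathbb{Q}) = 0$, forcing $\dim_{\mathbb{Q}} H_1(\partial W; \mathbb{Q}) = 0$. As $\partial W \cong \Sigma$ is a closed connected orientable surface, this means $\Sigma \cong S^2$, which, referring to part (a) of the definition of finite total curvature type, is precisely the statement that $\widetilde{M}$ is a sphere.

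It then remains to upgrade $W$ from a simply connected $3$--manifold with spherical boundary to an actual ball. I would prefer the elementary route: $\partial W$ is a piecewise smooth, hence tame, embedded $2$--sphere in $\R^3$, so by the (smooth) Schoenflies theorem it bounds a standard ball, and $W$ is exactly the compact region it cuts off; alternatively, one may cap $\partial W$ off with a ball to obtain a closed simply connected $3$--manifold, which is $S^3$ by the resolved Poincaré conjecture, so that $W$ is a ball. Reattaching the half cylinders then yields the claimed description of $H_0^c$.

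I expect the genus computation to carry essentially all of the content, and the only points to watch are that the hypotheses of the duality lemma are genuinely met -- connectedness and orientability of $\Sigma$, both checked above -- together with the bookkeeping identifying $\partial W$ with $\Sigma$. The passage from ``spherical boundary'' to ``ball'' is then only a matter of taste in which classical theorem to cite.
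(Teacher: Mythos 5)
Your proposal is correct and follows essentially the same route as the paper's proof: both pass to the compact core produced by Lemma \ref{handlebody}, transfer the simple connectivity of $H_0^c$ from Lemma \ref{fun_triv} to that core, and then apply the ``half lives, half dies'' consequence of Poincar\'e--Lefschetz duality (Lemma 3.5 of \cite{Hat}) to force the boundary surface to have genus zero. The paper runs the duality step contrapositively (if $g \geq 1$ then $H^1(\overline{U}) \neq 0$, hence $H_1(\overline{U}) \neq 0$ by universal coefficients and $\pi_1(U) \neq 1$ by Hurewicz), whereas you run it directly over $\mathbb{Q}$; the content is identical.

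One step in your write-up does need repair, though it is easily patched. You assert that each solid half cylinder deformation retracts onto its attaching disc $D_i$. The paper explicitly cautions, just before Lemma \ref{handlebody}, that these domains are only known to be bounded by surfaces homeomorphic to planes and are \emph{not} claimed to be homeomorphic to the standard solid half cylinder, precisely because the asymptotic structure of the ends is not prescribed; so this retraction is not free of charge (it can be proved -- the domains are simply connected by Van Kampen, acyclic by Alexander duality, hence contractible by Whitehead -- but that requires an argument you did not give). Fortunately all you actually use is $\pi_1(W) = 1$, and this follows directly: by Seifert--Van Kampen applied to the decomposition of $H_0^c$ into $W$ and the cylinder domains glued along the simply connected discs $D_i$, the group $\pi_1(H_0^c)$ contains $\pi_1(W)$ as a free factor, and the former is trivial by Lemma \ref{fun_triv}. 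This is in substance how the paper proceeds. Finally, your closing step -- upgrading the simply connected compact core with sphere boundary to an actual ball via Alexander--Schoenflies (or the Poincar\'e conjecture) -- addresses the ``ball'' part of the statement, which the paper's own proof leaves implicit; that is a worthwhile addition rather than a deviation.
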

$\medskip$

\begin{pf} 
Suppose that the genus $g$ of $\widetilde{M}$ is greater than or equal to 1. Again fixing the time $t = 0$ to be concrete, by capping off the ends of $M_0$ into the solid half cylinders of $H_0^c$ using the disjoint discs from the lemma above and smoothing out the edges we obtain a smooth surface $\Sigma$, homeomorphic to $\widetilde{M}$, which bounds a domain $U$. Because the capped ends are all homeomorphic to planes, one can see by Seifert--Van Kampen  (using slightly enlarged domains bounded by these capped off ends) and that $\R^3$ is simply connected that the domains in $H_0^c$ they bound are indeed simply connected. Using this one can continuously deform any closed curve in $H_0^c$ into one laying entirely $U$ so that $\pi_1(U) \simeq \pi_1(H_0^c)$. Since $\overline{U}$ is a compact 3--manifold, we have that the rank of the image of the map $H^1(\overline{U}) \to H_1(\Sigma)$ is half the rank of $H_1(\Sigma)$ (see lemma 3.5 of \cite{Hat}). When $g \geq 1$ the rank of $H_1(\Sigma)$ is nonzero of course, which implies that $H^1(\overline{U})$ and hence $H_1(\overline{U})$ is nontrivial, as a consequence of the universal coefficient theorem. So, by the Hurwicz isomorophism, neither is $\pi_1(U)$ contradicting lemma \ref{fun_triv}.
\end{pf}

To refine this picture, we will glean information about $M_t$ from its limit in the far past i.e. as $t \to -\infty$. The first lemma says that the limit is well defined/the convergence is full, at least in a weak sense, essentially because the sets $H_t$ are monotone increasing in terms of set inclusion as $t \to -\infty$:  
\begin{lem}\label{Haus_conv} The limits $\lim_{t \to -\infty} H_t$, $\lim_{t \to -\infty} M_t$, which we denote by $H_{-\infty}$, $\partial H_{-\infty}$ respectively, are well defined/the convergence is full in the Hausdorff topology on bounded domains. 
\end{lem} 
\begin{pf} 

First note that the space of compact subsets of a compact set (here, we consider the closure of $H_t$ in $B(0,R)$) is compact so that for any sequence $t_i \to -\infty$ $H_{t_i} \cap B(0,R)$ subconverges to some, possibly empty, set. Now let $t_k, t_\ell \to -\infty$ be two sequences for which $H_{t_k}\cap B(0,R)$ Hausdorff converges to a set $S_1$ and $H_{t_\ell}\cap B(0,R)$ converges to $S_2$. By set monotonicity of the flow it's easy to see that $S_1 \subseteq S_2$ and vice versa, so they are equal implying for any sequence $t_m \to -\infty$ that $H_{t_m}$ must subconverge to a set $S$ independent of sequence. Then the full convergence claim follows by a compactness contradiction argument. As a consequence the boundaries of $H_t$, which are precisely $M_t$, must also fully converge in Hausdorff distance on bounded sets to a set which, by some abuse of notation we'll define as $\partial H_{-\infty}$ (since a set $C$ and its closure are the same in Hausdorff distance, there is potentially some ambiguity when speaking of the boundary of $\partial H_{\infty}$). 
\end{pf}
$\medskip$

We next refine the geometry of this limit; For the sake of exposition first let's show a result in the case $M_t$ has pointwise uniformly bounded curvature, by which we mean $|A|^2(p,t) < C$ for some fixed constant $C$ independent of $p$ and $t$-- this is indeed satisfied by the eternal flow out of a catenoid mentioned in the introduction, but a priori this isn't immediate. Precisely, let's first show: 
\begin{lem}\label{farpast_easy} Supposing in addition to the assumptions of Theorem \ref{mainthm} above that $|A|^2(p,t)$ is bounded by a uniform constant $C$, the set $\lim\limits_{t \to -\infty} M_t$ is either empty or diffeomorphic to a disjoint union of smooth embedded complete minimal surfaces, where the convergence is full in the sense of varifolds for any given fixed bounded domain. 
\end{lem}

\begin{pf}
 Note that by the pointwise curvature bound and Shi's estimates that $|\nabla^k A|$ for any $k \geq 1$ is bounded uniformly along $M_t$ for all times $t < 0$. This gives by the entropy bound and Arzela--Ascoli subsequential convergence of the flows $M_{t + s_i}$ for a sequence $s_i \to -\infty$ in $C^k$ norm (a priori, possibly multisheeted) for any $k \geq 0$ on bounded sets. The limit flow will be smooth (but potentially only immersed) and evolve by the classical mean curvature flow.  Naturally denoting this limit flow by $M_{-\infty,t}$, we see that a priori it may depend on the sequence of $s_i$ but actually because the Hausdorff convergence in lemma \ref{Haus_conv} above is full we see this is not the case and in fact by the assumed curvature bound, namely that in balls of fixed radius $M_t$ can be written as a union of graphs of uniformly bounded gradient, we see the support of $M_{-\infty, t}$ must be precisely $\partial H_{-\infty}$.
$\medskip$

Now, because the $M_t$ are all embedded we see that $\partial H_{-\infty}$, in some suitably small ball $B$, can be locally written as the union of smooth graphs which at worst intersect tangentially. The flow $M_{-\infty, t}$ is also represented locally by the union of these graphs, although one might imagine with different multiplicities. At any rate because the flow $M_{-\infty, t}$, which is smoothly varying, is precisely $\partial H_{-\infty}$ for all times these graphs must each be minimal. Then by the maximum principle these graphs must either agree or be disjoint so $\partial H_{-\infty}$ must actually be an embedded minimal surface. 
$\medskip$

We next claim that for fixed $t$ as $s \to -\infty$ that the convergence of $M_{t + s}$ to $\partial H_{-\infty}$ must be with multiplicity one -- by the curvature bound assumption we see the convergence must be locally graphical but with multiple connected components. To see this, by the set monotonicity of the flow if the backwards convergence was greater than 2 at some point of $\partial H_{-\infty}$ then $M_t$ could not intersect $\partial H_{-\infty}$ and there would have to be a sheet on either side of $\partial H_{-\infty}$, so $\partial H_{-\infty}$ would disconnect $M_t$ giving a contradiction (the flow is assumed to be connected). In particular, for any sequence $t_i \to -\infty$ for which $M_{t_i}$ converges in the sense of varifolds the limit varifold must be the same ($\partial H_{-\infty}$ with multiplicity one). With this in mind it's reasonable to denote then $\partial H_{-\infty} = M_{-\infty}$, and a compactness--contradiction argument (using Allard's compactness theorem via the entropy bound and finite total curvature type assumption) gives that the convergence $M_{t} \to M_{-\infty}$ is full in the sense of varifolds. 
\end{pf}

In fact, because the backwards convergence above is with multiplicity one it is not too hard to see it is smooth and we discuss this more below. To deal with the more general case we employ the finite total curvature type assumption along with Simon's sheeting theorem as employed by Ilmanen in \cite{IP}. $\medskip$

\begin{lem}\label{farpast} The set $\lim\limits_{t \to -\infty} M_t$ is either empty or diffeomorphic to a disjoint union of smooth embedded complete minimal surfaces, where the convergence is locally in the sense of varifolds possibly with multiplicity. 
\end{lem}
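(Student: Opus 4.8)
The plan is to produce $\lim_{t\to-\infty}M_t$ as a subsequential limit of time slices via compactness, then to use the mean convex structure to see that the limiting flow is static and hence minimal, with hypothesis (b) supplying regularity.

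First I would set up compactness. The finite entropy hypothesis gives mass bounds $\mu_t(B(0,R))\le C_R$ that are uniform in $t$, so by the compactness theory for integral Brakke flows the time-translated flows $\{M_{t+t_i}\}_{t\in\R}$, for any sequence $t_i\to-\infty$, subconverge to an integral Brakke flow $\{N_t\}_{t\in\R}$; the $t=0$ slice is the candidate limit $N=\lim M_{t_i}$ (possibly the zero varifold, which is the empty case). On each ball $B(0,R)$, for $s$ sufficiently negative hypothesis (b) gives the uniform bound $\int_{B(0,R)\cap M_s}|A|^2<C$, the scale-invariant curvature energy in dimension two, so standard compactness for surfaces of locally bounded total curvature upgrades the varifold convergence to smooth convergence on $B(0,R)$ away from the finitely many points where $\int|A|^2$ concentrates.

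Next I would identify the limit as minimal. By mean convexity the regions $H_t$ that $M_t$ bounds increase monotonically as $t\to-\infty$, so $H_s\nearrow\Omega:=\bigcup_s H_s$, an open set. For every fixed $t$ one has $t+t_i\to-\infty$, hence $H_{t+t_i}\to\Omega$; the limiting region is therefore the \emph{same} set $\Omega$ for all $t$, which forces the limiting Brakke flow $\{N_t\}$ to be time-independent. A static integral Brakke flow has vanishing generalized mean curvature, i.e. each slice is a stationary varifold, so $N$ is minimal — this is the backward-time analogue, via the mean convex structure used in White's theory as recorded in the Preliminaries, of his forward-time statement there.

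Finally I would extract the stated regularity. The limit $N$ is a stationary integral varifold which, by the previous steps and (b), has locally finite total curvature and arises as a limit of embedded surfaces; by Allard's regularity theorem its regular set is a smooth embedded minimal surface, and since the only candidate singular points are the isolated loci of curvature concentration, a removable-singularity argument for minimal surfaces of locally finite total curvature shows the singular set is empty. Each component is then a smooth, embedded, complete minimal surface — completeness because $N$ is a closed subset of $\R^3$ without boundary — which is the assertion. The main obstacle is the minimality step: transferring the mean convex structure to the limit to conclude the limiting flow is static despite the possible collapsing of sheets, which is precisely the phenomenon permitting higher multiplicity and the reason convergence is only claimed in the varifold sense. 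Controlling the finitely many concentration points is a secondary difficulty resolved by the total curvature bound.
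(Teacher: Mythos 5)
Your overall outline matches the paper's strategy (Brakke compactness via the entropy bound, monotonicity of the regions $H_t$, time-independence of the limit flow implying stationarity, then Allard regularity plus a removable-singularity theorem), but there is a genuine gap at the central step, and it is exactly the one you flag as ``the main obstacle'' without resolving it. Your claim is: since $H_{t+t_i}\to\Omega$ for every fixed $t$, the limiting region is the same for all $t$, and this ``forces the limiting Brakke flow $\{N_t\}$ to be time-independent.'' This does not follow. Hausdorff/set convergence of the regions $H_{t+t_i}$ controls only the \emph{supports} of the limit varifolds, not their multiplicities. A priori, the slice $N_t$ (the varifold limit of $M_{t+t_i}$) and the slice $N_{t+\delta}$ (the limit of $M_{t+\delta+t_i}$) could both be supported in $\partial\Omega$ yet carry different integer multiplicities; likewise, two different sequences $t_i\to-\infty$ could produce limits with the same support but different multiplicities, in which case the ``limit flow'' is not even well defined, let alone time-independent. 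Since time-independence as a \emph{varifold} flow is precisely what is fed into the result (Tonegawa, Ch.~5) that a static integral Brakke flow is stationary, the minimality conclusion is unsupported as written. Note you cannot repair this by arguing that an integral varifold with constant support in a smooth minimal surface must have locally constant multiplicity: at this stage nothing is known about smoothness or minimality of $\partial\Omega$, so that reasoning would be circular.

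The paper closes this gap with the part of the proof you skipped: (i) using the curvature-concentration measures $\sigma_t=|A|^2\mathcal{H}^2\llcorner M_t$ and Simon's sheeting theorem to write $M_{t+s_\ell}$, away from finitely many small balls, as a union of disjoint sheets over the limit; (ii) showing the support of any subsequential varifold limit actually \emph{equals} the Hausdorff limit $\partial H_{-\infty}$ (using the lower area bounds in the sheeting theorem); and, crucially, (iii) an intertwining argument: if two sequences gave multiplicities $a\neq b$ on some component, splicing them into one sequence and using smoothness of the flow produces intermediate times at which the flow is not sheeted over $\partial H_{-\infty}$ away from finitely many points, contradicting the sheeting theorem along a further subsequence. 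Only after multiplicity is shown independent of the sequence does uniqueness of the limit Brakke flow, hence time-independence ($N_t=N_{t+\delta}$ because both arise as limits of time-translates of the same flow), hence stationarity, follow. A secondary, smaller gloss in your final step: a varifold limit of embedded surfaces need not be embedded (sheets can touch tangentially); the paper invokes the strong maximum principle for minimal surfaces to conclude the local sheets either coincide or are disjoint, and you would need the same remark.
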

\begin{pf}
 By the finite total curvature type condition and entropy bound we may employ Brakke's compactness theorem with the flows $M_{t+s_i}$ where $s_i \to -\infty$ to obtain a subsequential limit Brakke flow which we'll denote here by $M_{-\infty, t}$, a priori dependent on the choice of sequence taken. Of course (as a weaker statement) for $t$ fixed under our assumptions by Allard's compactness theorem for any sequence $s_i \to -\infty$ we may extract a subsequence $s_k \to -\infty$ such that $M_{t+ s_k}$ varifold converges, and it follows from lemma \ref{Haus_conv} for any sequence $s_k \to -\infty$ for which $M_{t + s_k}$ converge as varifolds that the support of the limit varifold $\nu$ is contained in $\partial H_{-\infty}$, which by abuse of terminology as above we define as the Hausdorff limit of $M_t$. Similar to above we want to study next to what extent is the limit $\nu$ dependent on the sequence $s_k$. 
$\medskip$

To proceed it's helpful to have a more concrete handle on the nature of the convergence $M_t$ to a limit varifold. Since we assume $M_t$ has finite entropy and satisfies part (b) of the finite total curvature type condition we draw inspiration from Ilmanen's analysis in \cite{IP} to do so -- see also \cite{Sun} for a nice summary. Sketching this out, considering the measures $\sigma_t = |A|^2 \mathcal{H}^2 \llcorner M_t$, where $\mathcal{H}^2$ is the 2--dimensional Hausdorff measure, by the finite total curvature type condition for a sequence $t + s_k \to -\infty$ we may take a subsequence $t + s_\ell \to -\infty$ for which these radon measures $\sigma_{t + s_\ell}$ converge to a limit measure $\sigma$. By Simon's sheeting theorem in balls $B$ where $\sigma_t$ is sufficiently small (i.e. where the integral curvature doesn't concentrate) we can locally write $M_t \cap B$ as a union of embedded discs with controlled area. With this in mind, suppose that $t + s_k \to -\infty$ is a sequence for which as varifolds $M_t$ converge to some measure $\nu$. Denoting the support of $\nu$ by $N$, then by the finite total curvature type assumption and the sheeting theorem it follows there is a subsequence $t + s_\ell$ as above so that in each ball $B(0,R)$ there will be a finite (possibly empty) set of points $Q = Q(R)$ for which the convergence $M_t \to N$ in $B(0,R)$ will be multisheeted over $N \setminus Q$. In particular, writing the set $Q = \{p_1, \ldots, p_k \}$ we have there are $r_{s_\ell} \to 0$ for which in the complement of $\cup_{i=1}^k B(p_i, r_i(s_\ell))$ $M_{t + s_\ell} \cap B(0,R)$ is the union of disjoint surfaces ``sheeted'' over $N$, which may be connected/bridged within the balls. 
$\medskip$

We next show that the support $N$ of the/a limit measure $\nu$ as described above must in fact equal $\partial H_{-\infty}$ up measure zero inspired by the proof of corollary 5.3 of \cite{Sun}. Supposing this is not the case, then of course if we further pass to the subsequence $M_{t + s_\ell}$ defined above the limit of sequence will also not have support equal to $\partial H_{-\infty}$ (indeed, the limit measure is still $\nu$), so it suffices to consider this sequence to gain a contradiction. Working towards a contradiction, suppose then $p \in \partial H_{-\infty}$ is not in the support $N$ of $\nu$. Then there is a $B(p,r)$ for which $N$ has measure zero in $B(p,r)$. Because $\partial H_{-\infty}$ is the Hausdorff limit of $M_t$ and the convergence is full for $t$ sufficiently large $M_t \cap B(p,r/2)$ is nonempty. Considering all this within a large fixed ball $B(0,R)$, from the paragraph above there is a finite set of points $Q$ away from which the convergence of $M_{t + s_\ell}$ is multisheeted, so in particular for $t + s_\ell$ sufficiently negative there is a finite set of points $\{p_1, \ldots, p_k\}$ so that $M_{t + s_\ell} \cap (\cup_i B(p_i, r/1000))^c$ is given by a union of disjoint embedded surfaces. Because for $t + s_\ell$ sufficiently negative $M_{t+ s_\ell}$ passes through $B(p, r/2)$ we then get a contradiction, because as a consequence of this and of the sheeting theorem (the discs in the sheeting theorem have a lower area bound as well as an upper bound) the measure of $M_{t + s_\ell} \cap B(p,r)$ is uniformly bounded below for very negative $t + s_\ell$ giving a contradiction. 
$\medskip$

 Now, a technical issue is that the multiplicity of convergence here may depend on the sequence $s_k$. In the expository lemma above, lemma \ref{farpast_easy}, we first showed that $\partial H_{-\infty}$ was minimal and then showed the multiplicity of convergence was independent of sequence, but how we will proceed here the steps are reversed and this point on multiplicity we'll soon see is important. Note since $M_t$ is embedded the number of sheets in the sheeting theorem must agree in overlaps of balls where it applies, so since we can apply it away from a discrete set of points the multiplicity of convergence must be constant on connected components of $\partial H_{-\infty}$ for a given sequence $s_k$. By the uniform entropy bound, this number is uniformly bounded above independent of the sequence. Suppose on a connected component of $\partial H_{-\infty}$ the convergence is with multiplicity $a$ along $s_k$ and $b$ along say the sequence $s_k'$ with $a \neq b$. Intertwining the sequences to get a sequence $s_\ell$, we see that by the smoothness of the flow there must be times $t_k$ between $t + s_\ell$ and $t + s_{\ell + 1}$ for which the flow is not sheeted away from a set of small balls over $\partial H_{-\infty} \cap B(0,R)$. Taking a subsequence of this as above (using the sheeting theorem, etc.) we gain a contradiction, so the multiplicity of convergence must be independent of converging sequence. 
 $\medskip$
 
As a consequence, by another compactness contradiction argument with Allard's compactness theorem, we can see the varifold convergence of $M_{t+s}$ is full/true in $s \to -\infty$ (again, within any given fixed bounded domain of $\R^3$).  Considering the definition of Brakke flow convergence this gives the Brakke flow $M_{-\infty, t}$ defined above is independent of the sequence $s_i \to -\infty$ chosen, implying $M_{-\infty,t}$ $M_{-\infty, t + \delta}$ are the same for any $\delta$. Hence we find again that $M_{-\infty, t}$ is time independent. As discussed in chapter 5 of \cite{Ton}, this implies that $M_t$ is stationary or so that  $|H| = 0$ a.e. on $M_{-\infty, t}$. We write this stationary set as $M_{-\infty}$ like in the expository lemma. 
$\medskip$

Because $|H| = 0$ a.e. on $M_{-\infty}$, so that it's minimal, when considering the measure convergence on just one of these sheets in a ball where $\sigma$ doesn't concentrate then by applying Allard's regularity theorem we get that $M_{-\infty}$ has $C^{1, \alpha}$, and so smooth by regularity theory for minimal surfaces in balls, support. This reasoning applies in balls with centers in all of $M_{-\infty}$ except potentially at a discrete set of points where the measures concentrate -- these correspond to places where the sheets are potentially bridged. These can be pieced together to realize $M_{-\infty}$ globally as an immersed minimal surface away from the points mentioned. Gulliver's removable singularity theorem then give $M_{-\infty}$ is a smooth minimal surface which can be seen to be embedded because the flow $M_t$ is using the maximum principle. As a result it is also complete.  \end{pf}
$\medskip$

Before using this to say something more about $M_t$ (for finite times) first we refine the topology this convergence is in, where actually the information about the topology of $M_t$ we've already gained comes into play in a minor (possibly inessential) way. Below we strongly use the assumption that $M_t$ is connected: note for embedded minimal surfaces $\Sigma$ of finite total curvature one create ancient flows which propagate into both bounded domains of $\Sigma$, and considering these as one single (disconnected) ancient mean curvature flow they will flow back to $\Sigma$ as $t\to -\infty$ with multiplicity two. 
$\medskip$

\begin{lem}\label{conv_smth} The convergence of $M_t \to M_{-\infty}$ is locally smooth with multiplicity one, and thus the components of $M_{-\infty}$ are smooth complete minimal surfaces of finite total curvature. 
\end{lem}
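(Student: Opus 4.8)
My plan is to establish three things in turn: that the multiplicity of convergence is one, that the convergence is then smooth, and finally that each component of $M_{-\infty}$ has finite total curvature. I would first recall from the proof of Lemma \ref{farpast} everything already available: $M_{-\infty} = \partial H_{-\infty}$ is a smooth, embedded, complete minimal surface, the convergence $M_t \to M_{-\infty}$ is full in the varifold sense, the multiplicity of convergence is constant on each connected component of $M_{-\infty}$, and away from a discrete ``bridging'' set $Q$ the convergence is as a disjoint union of graphical sheets over $M_{-\infty}$. I would also note that since $H_t \subset H_{-\infty}$ for every $t$ by set monotonicity, we have $M_t \subset \overline{H_{-\infty}}$, so the sheets of $M_t$ accumulate on $M_{-\infty}$ strictly from the $H_{-\infty}$ side.

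The main step, which I expect to be the main obstacle, is multiplicity one. I would fix a component $\Sigma$ of $M_{-\infty}$ and suppose its multiplicity is $m \geq 2$. Near a point $p \in \Sigma \setminus Q$ and for $t$ very negative, $M_t$ is then a disjoint union of graphs $S_1, \dots, S_m$ over a disc in $\Sigma$, ordered by their small positive distance to $\Sigma$ on the $H_{-\infty}$ side. Because $M_t$ is a connected embedded surface separating $\R^3$ into $H_t$ and $H_t^c$, these labels alternate as one crosses $S_1, \dots, S_m$ in order: since $H_{-\infty}^c \subset H_t^c$, the side of $\Sigma$ away from $H_{-\infty}$ together with the slab between $\Sigma$ and $S_1$ lies in $H_t^c$, so the region beyond the outermost sheet $S_m$ lies in $H_t$ precisely when $m$ is odd. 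On the other hand any point $q$ at a fixed small distance from $\Sigma$ on the $H_{-\infty}$ side is an interior point of $H_{-\infty} = \bigcup_t H_t$, hence $q \in H_t$ for all sufficiently negative $t$; as the sheets collapse onto $\Sigma$, $q$ eventually lies beyond $S_m$, forcing $m$ to be odd and excluding all even multiplicities outright. For odd $m \geq 3$ I would instead exploit the nested structure: the slab between $S_1$ and $S_2$ lies in $H_t$ while the shell between $S_2$ and $S_3$ lies in $H_t^c$, so this $H_t^c$-shell is separated from the main body of $H_t^c$ by an intervening slab of $H_t$. To keep $H_t^c$ connected the sheets must then bridge one another, and tracing this through should show that the limit $\Sigma$ would separate the connected surface $M_t$, or else produce a noncontractible loop in $H_t^c$, contradicting the connectedness of $M_t$ and the triviality of $\pi_1(H_t^c)$ from Lemma \ref{fun_triv}. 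Reconciling the local graphical picture with this global topology is the delicate point, and it is what forces $m = 1$ on every component.

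Once multiplicity one is in hand the remaining steps should be routine. The limiting static Brakke flow is then $M_{-\infty}$ with multiplicity one, so its Gaussian density is identically one; by the local regularity theorem (see \cite{W, Ton}) the convergence $M_{t+s} \to M_{-\infty}$ would be smooth with locally uniform curvature bounds on compact sets, and in particular the bridging set $Q$ must be empty. Finally, smooth multiplicity-one convergence lets me pass the uniform integral curvature bound of condition (b) to the limit: for every $R$ one gets $\int_{B(0,R) \cap M_{-\infty}} |A|^2 \leq C$, and letting $R \to \infty$ gives $\int_{M_{-\infty}} |A|^2 \leq C < \infty$. Since $M_{-\infty}$ is minimal its Gauss curvature satisfies $|A|^2 = -2K$, so this is exactly a finite total curvature bound, and hence each component of $M_{-\infty}$ is a complete minimal surface of finite total curvature, as claimed.
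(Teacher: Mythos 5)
Your overall architecture (reduce everything to multiplicity one, then get smoothness from local regularity, then pass the integral curvature bound of condition (b) to the limit) matches the paper, and those last two steps are fine. The gaps are both inside the multiplicity-one step, which is the heart of the lemma. Your parity argument silently assumes that near a component $\Sigma$ of $M_{-\infty}$ all the sheets of $M_t$ lie on a single side of $\Sigma$, with the other side of $\Sigma$ contained in $H_{-\infty}^c \subset H_t^c$. That presupposes $H_{-\infty}^c$ has nonempty interior adjacent to $\Sigma$, and in the dangerous multiplicity-two configuration this is exactly what fails: one sheet on each side of $\Sigma$, with $H_t^c$ equal to the slab between them collapsing onto $\Sigma$, so that $H_{-\infty}^c = \bigcap_t H_t^c$ is $\Sigma$ itself and has empty interior. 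Then there is no ``side of $\Sigma$ away from $H_{-\infty}$,'' both outer regions lie in $H_t$ for $t$ very negative, and the alternation count gives $m$ \emph{even}, not odd -- so parity cannot exclude $m=2$. This is not a hypothetical configuration: as the paper notes immediately before the lemma, a disconnected ancient flow propagating off both sides of an embedded minimal surface converges back to it with multiplicity two, and it satisfies every hypothesis your parity argument uses (separation, strict mean convexity, set monotonicity). Any correct exclusion of $m=2$ must therefore use connectedness of $M_t$; the paper does this by showing that in the two-sided slab picture \emph{without bridges}, $M_t \cap B(0,R)$ is disconnected for all large $R$ and very negative $t$, which by monotonicity of $H_t^c$ forces $M_t$ itself to be disconnected.

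The second gap is the case you yourself flag as ``the delicate point'' (odd $m \geq 3$, and, per the above, bridged $m = 2$): this is left as a sketch, and it is precisely where the paper does its real work. Once connectedness of $M_t$ is rescued by small necks (``bridges'') joining the sheets near points of curvature concentration, an appeal to connectedness plus $\pi_1(H_t^c)=1$ from Lemma \ref{fun_triv} at a fixed time does not finish the argument -- for instance, a neck bounding a solid cylindrical segment of $H_t$ creates no noncontractible loop in $H_t^c$ at all. The paper's treatment is dynamical rather than purely topological: it uses set monotonicity to show bridges cannot slide off to spatial infinity backwards in time, rules out persistent necks bounding solid cylinders of $H_t$ (and nested annuli) by a curvature blow-up argument contradicting smoothness of the eternal flow, and rules out persistent necks bounding solid cylinders of $H_t^c$ by White's expanding hole theorem, contradicting the varifold convergence $M_t \to M_{-\infty}$ established in Lemma \ref{farpast}. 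None of these mechanisms appears in your proposal, so the multiplicity-one claim -- and hence the lemma -- is not established as written.
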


\begin{pf}
We suppose that $M_{-\infty}$ is nonempty, or else there is nothing to do. We note by Brakke's regularity theorem and Shi's estimates the convergence of $M_t \to M_{-\infty}$ will be in $C^\infty_{loc}$ if we can show the multiplicity of the convergence in the argument above is one by a compactness contradiction argument -- an easy way to see we have the necessary density bounds to apply the regularity theorem is by the one sided minimizing property for mean convex flows (again, \cite{W}) used with small tubular neighborhoods of $M_{-\infty} \cap B(0,R)$. By the finite total curvature type assumption this will also imply $M_{-\infty}$ has bounded total curvature, and as above the limit will be complete. 
$\medskip$

As pointed out in the proof above as a consequence of Simon's sheeting theorem we have that for any sequence $s_i \to -\infty$, after potentially taking a subsequence, most, if not all of $M_{s_i}$ can be written as the union of some number of disjoint sheets over $M_{-\infty}$ in a given ball $B(0,R)$ for $s_i$ sufficiently negative -- by ``most'' here we mean away from a set (for $t$ sufficiently negative) of small disjoint balls within which there may be ``bridges'' connecting the sheets. The number of sheets must be finite by the finite entropy assumption, of course, and the lemma above gives that the multiplicity of convergence is independent of such a sequence. Our main goal is to gain a contradiction if there is such a sequence $s_i \to -\infty$ for which $M_{s_i} \to M_{-\infty}$ with multiplicity $m$ two or greater. Because dealing with each of the possible cases is relatively lengthy for the reader's convenience below we break the argument into a number of highlighted subclaims:
$\medskip$

\textbf{Subclaim 1: there must be "bridges" joining sheets.} We first claim there indeed must be bridges connecting at least some of these sheets where $R$ is large enough, corresponding to where the curvature concentrates in the limit -- it isn't immediate there should be any despite $M_t$ being connected because we are in the noncompact case and so a priori where the sheets ``connect/wrap around'' could tend to spatial infinity in the backwards limit, and also where the curvature concentrates there could just be ``pimples'' instead of actual bridges. More precisely what we are claiming is that if $R$ is sufficiently large there are a finite (nonempty) set of balls $B(p_1(s_i), r_1(s_i)), \ldots B(p_{N(s_i)}(s_i), r_{N(s_i)}(s_i))$ in $B(0,R)$ for which ($M_{s_i} \cap B(0,R)) \setminus \{ \cup_{j = 1}^{N(s_i)} B(p_j(s_i), r_j(s_i)) \}$ is a union of $m(s_i)$ disjoint smooth manifolds with boundary, that $M_{s_i} \cap B(0,R)$ has $1 \leq c < m(s_i)$ connected components, and the maximum radius of the balls above tends to zero as $s_i \to -\infty$. Here by $N(s_i)$ and $m(s_i)$ we denote the number of balls $N(s_i)$ and number of sheets $m(s_i)$ respectively. These numbers must eventually stabilize by the choice of sequence, but imaginably it could be the case that $m(s_i) \neq m$ for $s_i$ relatively positive. 
$\medskip$

Now, suppose this isn't the case; for the claim to fail for this sequence it must be for a given $R > 0$ that $M_{s_i} \cap B(0,R)$ has $m(s_i)$ components for $s_i$ sufficiently negative. By the varifold convergence for $s_i$ sufficiently negative $m(s_i) = m$. Using that the flow is smooth arguing exactly as in the lemma above where we showed the multiplicity of convergence was independent of sequence $s_k \to -\infty$ a compactness contradiction argument with the sheeting theorem gives that there will be no ``new'' sheets of $M_{s_i} \cap B(0,R)$ in that sheets of $M_{s_i} \cap B(0,R)$ correspond to components of $M_{s_j} \cap B(0,R)$ for $i,j$ large enough. Specifically, if this is not the case then because the flow is smooth there must be a sequence of times $t_k \to -\infty$ for which the flow isn't locally sheeted over $M_{-\infty}$ (away from shrinking neighborhoods of a finite set of points in the sense described above), but one can extract a subsequence for which this must be true. With this in mind because $H_t, H_t^c$ alternate across these sheets by the (strict) mean convexity of the flow we see that the multiplicity of convergence $m$ must be equal to two. The two connected components of $M_{s_i} \cap B(0,R)$ by set monotonicity would have to locally bound a slab in $H_t^c$, and because we assume strict mean convexity of the flow it must be the case $\partial H_{-\infty}$ lays strictly within. Now because that are no new sheets along $M_{s_i}$ for $i$ sufficiently negative we may sensically discuss them in between times $s_i, s_{i+1}$, although it could be the case in between these times they are joined by bridges which may have propagated in from outside $B(0,R)$: we next rule these out. Supposing there are such bridges, one can see from the paragraph immediately below that because at the time $s_i$ there are no bridges in $B(0,R)$ by the set monotonicity of the flow they could only be given by necks which either bound solid cylindrical segments in $H_t$. Because $H_t$ is an increasing domain backwards in time at least some of the  bridges must persist to time $s_{i+1}$, and as discussed more in the paragraph below such bridges cannot have moved far by set monotonicity of the flow. Considering $R$ slightly larger would then give a contradiction to the presumption the claim fails, giving that $M_{-\infty}\cap B(0,R) \subset H_t^c$ for all sufficiently negative times. Because $M_{-\infty}$ is embedded so disconnects $\R^3$ and $m = 2$ this gives for every $R >>0$ there is $s$ sufficiently negative so that $M_{t} \cap B(0,R)$ is disconnected for $t< s$. Because $H_t^c$ is an expanding domain under the flow (considered forward in time) then this gives that actually $M_t$ must be disconnected, giving a contradiction because we suppose $M_t$ is connected. \textbf{(end of subclaim 1)}
$\medskip$

So, for $R$ sufficiently large there must be a sequence as indicated above, where within $B(0,R)$ one can find bridges at least at the times $s_i$. Because in lemma \ref{fun_triv2} $\widetilde{M} \simeq S^2$ we see that $M_{s_i} \cap B(p_j(s_i), r_j(s_i))$ are homeomorphic to a union of discs with a number of discs connected by cylindrical segments (justifying the use of the term ``bridges''). These annuli could have other annuli nested within them, in the obvious sense, or bound solid cylindrical segments in $H_t$ or $H_t^c$ within the balls $ B(p_j(s_i), r_j(s_i))$. Of course, the convergence to $M_{-\infty}$ is also in Hausdorff distance in bounded domains so as one looks back in time the curvature along these necks must blow up. In particular with the expository lemma, lemma \ref{farpast_easy}, in mind if we had supposed a uniform curvature bound these would be ruled out immediately, going far enough back in time.
$\medskip$

\textbf{Subclaim 2: WLOG, bridges are "persistent" and don't wander far.} For a bridge found at time $s_i$ as long it joins two sheets we have that for $s_k < s_i$ (indeed for all $t < s_i$) if the bridge laid in $B(p_j(s_i), r_j(s_i))$ it cannot be distance greater than $r_j(s_i)$ from $p_j(s_i)$, because otherwise a portion of $H_{t}^c$ would have to overlap with $H_{s_i}$ for some $t < s_i$ which is impossible considering the domain $H_t$ is increasing backwards under the flow; this is a coarse upper bound but suffices for our purposes as it keeps bridges we may find from sliding to spatial infinity farther back in time. Now a priori there is the possibility of a number of the bridges potentially disappearing as we follow them back in time corresponding to a drop in the number of sheets in the convergence. For instance those bounding solid cylinders segments in $H_t^c$ might bound on one side a ball in $H_t^c$ which may collapse considering the flow further back in time because this is a contracting domain, but if the neck persists it cannot move far. Because we have ruled out the ``no bridges'' case above and they can't travel very far, it is the case that at least some of them do last as $t \to -\infty$ in $B(0,R)$ and without loss of generality we focus our attention on these ``persistent'' bridges, of which there must be at least one because we are supposing the convergence backwards in time is at least two. \textbf{(end of subclaim 2)}
$\medskip$

\textbf{Subclaim 3: there are no solid or nested bridges.} With subclaim 2 in hand, to start we can see that none of these cylindrical segments may bound a solid cylindrical segment in $H_t$. Suppose on the contrary one does, say contained in a ball $B(p_j(s_i), r_j(s_i)) \subset B(0,R)$ where $R$ is sufficiently large. First note $\sup |A|^2 \to \infty$ in $M_{s_i} \cap  B(p_j(s_i), r_j(s_i))$ as $s_i \to -\infty$, in particular it must do so along these connected cylindrical segments. Considering the flow is mean convex and so into the domain $H_t$, and the bridges contained in $B(p_j(s_i), r_j(s_i))$ are all contained in some large ball, say still $B(0,R)$ after adjusting $R$, even farther back in time implies that $\sup |A|^2 = \infty$ within $M_{s_i} \cap B(p_j(s_i), r_j(s_i)) \subset B(0,R)$, contradicting the smoothness of the flow. Alternately, perhaps one could argue using that since these solid cylindrical segments in $H_t$ should be expanding under the flow going back in time by the monotonicity of this set we would reach a contradiction to $M_t$ converging to $M_{-\infty}$. Similarly, we can rule out the appearance of any persistent annuli which have others nested within in $M_{s_i} \cap  B(p_j(s_i), r_j(s_i))$. \textbf{(end of subclaim 3)}
$\medskip$

 \textbf{Subclaim 4: there are no hollow bridges.} As a result the only remaining cylindrical segments are those which bound a solid cylindrical segment in $H_t^c$, which one might call "hollow." One can see (considering figure 1 below, where a similar idea is used in an argument below) then that consequently in any large ball $B(0,R)$ for $s_i$ sufficiently negative $M_{s_i}$ is the disjoint union of surfaces given by pairs of sheets bounding regions of $H_t$, with some pairs connected by such segments (so locally the regions are roughly speaking potentially punctured slabs). Our first claim in this case is that at most one of these pairs may persist as $s_i \to -\infty$, so the multiplicity of convergence is no more than 2. As in the no bridges case there will be no ``new'' pairs along the sequence $s_i$ for $i$ sufficiently large. Restricting ourselves to such $i$ because $M_t$ is connected for any two adjacent pairs there is a curve $\gamma$ in $M_{s_i}$ joining them, which should necessarily go outside $B(0,R)$. Now if we were not able to pick (in a continuous way) $\gamma_t$, where $\gamma_0 = \gamma$ so that it remained in a fixed bounded region for more negative times it would be the case that for any $R >0$ if $i$ is large enough then $H_{s_i} \cap B(0,R)$ is disconnected -- as in the no bridges case with the configuration of $M_{s_i} \cap B(0,R)$ in mind this would give that $M_t$ is disconnected giving a contradiction. So we may define $\gamma_t$ so it remains in a bounded region for more negative times, say within $B(0, \widetilde{R})$. Considering that $M_{s_i}$ eventually is sheeted over $M_{-\infty}$ in $B(0,\widetilde{R})$ as well $\gamma$ must go through a bridge far enough back in time, and such a neck by the choice of $\gamma_t$, as it goes between two ``slabs'' of $H_t$, must go along (one or a number of) of a nested necks or one which bounds a solid cylinder in $H_t$, particularly at least one of which must persist which was ruled out. 
 $\medskip$
 
Hence, for $i$ sufficiently large there must be some $j$ for which $H_{s_i} \cap  B(p_j(s_i), r_j(s_i)) \subset B(0,R)$ is a flattened punctured ball laying in a slab. Again, this bridge cannot slide to infinity tracing it back farther in time, and will always be contained in a large ball (which again we may suppose will be $B(0,R)$). Note if $r_j$ is sufficiently small depending $M_{-\infty}$ and taking $s$ more negative if necessary, the width of the slab can be taken to be as small as one wishes -- of course it is the case that the radii of all the balls where bridges occur in some fixed $B(0,R)$ tend to zero from earlier in the argument. Considering that the backwards limits $M_t$ and $M_{t + \delta}$ agree as $t \to -\infty$, as discussed in the argument for the previous lemma, then White's expanding hole theorem \cite{W} says that the cylindrical bridges cannot collapse in the backwards limit, giving a contradiction to $M_t$ varifold converging to $M_{-\infty}$. Alternately one can gain a contradiction using the mean convexity of the flow considering the sheets away from the bridges, because the domain $H_t$ increases backwards in time and so one can see at most one sheet could move towards $M_{-\infty}$ as $t$ is further decreased.  \textbf{(end of subclaim 4)} 
$\medskip$

To summarize we showed that if the number of sheets in the backwards convergence was with multiplicity greater than one, then there must be at least one persistent bridge joining them using the set monotonicity of the flow. Again using the set monotonicity of the flow though we could rule out such bridges, implying that  for any sequence $s_i \to -\infty$, a subsequence of $M_{s_i}$ must converge with multiplicity one, and hence smoothly, to $M_{-\infty}$. Because the varifold convergence of $M_t \to M_{-\infty}$ is full this then implies the statement. 
 \end{pf}

We next refine our topological information about $M_t$ using that the backwards limit is a minimal surface of finite total curvature. The main use of this fact is that minimal surfaces of finite total curvature must have parallel ends. For the purposes of showing the main result it is slightly stronger than necessary, and we give an alternate path for the sake of exposition afterwards.

\begin{lem}\label{2ends} The number of ends of $M_t$ is no more than two, and $H_t^c$ is homeomorphic to a ball with either one or two solid half cylinders glued along it. 
\end{lem}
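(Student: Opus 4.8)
The plan is to read off the number of ends of $M_t$ from the backward limit $M_{-\infty}$ and then use the strict mean convexity of the flow to cap it at two.

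First I would invoke the structure theory for the limit. By Lemma \ref{conv_smth} each component of $M_{-\infty}$ is a complete embedded minimal surface of finite total curvature, so by the classical asymptotic analysis of such surfaces (see e.g. \cite{HM1}) it has finitely many ends, each asymptotic to a plane or a catenoid, and all of its ends share a common limiting normal; normalizing this direction to $e_3$, every end of $M_{-\infty}$ is vertical. Because the convergence $M_t \to M_{-\infty}$ is smooth with multiplicity one (Lemma \ref{conv_smth}) and part (b) of the finite total curvature type hypothesis gives uniform control of $|A|^2$ in exterior regions for $t$ very negative, the ends of $M_t$ are in bijection with the ends of $M_{-\infty}$, and each solid half-cylinder of $H_t^c$ produced in Lemmas \ref{handlebody} and \ref{fun_triv2} is asymptotically a vertical tube. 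In particular $\widetilde M \simeq S^2$ has exactly $k$ punctures, where $k$ is the number of ends of $M_{-\infty}$, and it suffices to show $k \le 2$.

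Suppose for contradiction that $k \ge 3$. Since the ends of $M_{-\infty}$ are vertical and $M_{-\infty}$ is embedded, for $r$ large they are disjoint graphs $f_1 < \dots < f_k$ over the exterior of a disk, so the ends are linearly ordered and crossing each sheet interchanges the two complementary regions $H_{-\infty}$ and $H_{-\infty}^c$. Thus there is an interior end $f_i$, $1 < i < k$, whose neighboring sheets $f_{i-1}$ and $f_{i+1}$ bound it from below and above. I would now argue, exactly in the spirit of Lemma \ref{conv_smth}, that the corresponding channel of $H_t^c$ is squeezed between two sheets of the flow: following $H_t^c$ backwards in time (it is monotonically decreasing as $t \to -\infty$ by strict mean convexity), this channel must pinch off, which either contradicts White's expanding-hole theorem \cite{W} or, via the slab-separation argument already used in Lemma \ref{conv_smth}, forces $M_t$ to be disconnected. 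Either outcome contradicts our hypotheses, so $k \le 2$.

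I expect the exclusion of a third end to be the main obstacle, since the asymptotics of an interior end can be either planar or catenoidal and the pinching/avoidance argument must be made uniform as $t \to -\infty$; the cleanest leverage is that parallel ends linearly order the ends of $M_{-\infty}$, so that any third end is genuinely interior and its channel in $H_t^c$ cannot escape vertically while the neighboring sheets close it off. For the sake of exposition I would also record the slightly weaker forward-limit route suggested by Lemma \ref{longterm}: the forward limit is a disjoint union of mutually parallel planes, each of which has $M_t$ entirely on one side, and a connected surface cannot lie to one side of the middle plane of three stacked parallel planes, so the forward limit contains at most two planes. Once $k \le 2$ is established, the description of $H_t^c$ as a ball with one or two solid half-cylinders is immediate from Lemma \ref{fun_triv2}.
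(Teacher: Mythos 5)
Your overall skeleton (pass to the backward limit, use finite total curvature to get parallel ends via Jorge--Meeks type structure theory, then kill a third end) is the same as the paper's, but both load-bearing steps have gaps. First, the claimed bijection between ends of $M_t$ and ends of $M_{-\infty}$ is not justified and is in fact more than one can prove. Smooth multiplicity-one convergence (Lemma \ref{conv_smth}) is convergence on \emph{bounded} domains and says nothing about behavior at spatial infinity; likewise hypothesis (b) is a statement about fixed balls $B(0,R)$ with $t_R$ depending on $R$, and the hypotheses deliberately prescribe nothing about the asymptotics of the ends of $M_t$. Moreover your statement presupposes $M_{-\infty}\neq\emptyset$, which fails in general (the bowl soliton has one end and empty backward limit) and must itself be proved when $M_t$ has at least two ends. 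What the paper establishes, and what suffices, is the one-sided inequality: if $M_t$ has two or more ends, then $M_{-\infty}$ is nonempty and has \emph{at least} as many ends as $M_t$; this is done by taking properly embedded lines in $M_t$ joining distinct ends and using mean convexity together with the solid half-cylinders of Lemma \ref{handlebody} (planes as barriers) to force such lines to meet a fixed compact set for all $t$. The paper explicitly notes the number of ends could increase in the limit, so a bijection should not be asserted.

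Second, your closing mechanism --- that the channel of $H_t^c$ around an interior end ``must pinch off'' as $t\to-\infty$, contradicting White's expanding hole theorem or connectedness --- does not occur. Backwards in time $H_t^c$ decreases \emph{to} $H_{-\infty}^c$, and since the convergence is multiplicity one to distinct, separated parallel ends $f_{i-1}$, $f_i$, the channel between the corresponding sheets of $M_t$ converges to the open slab between $f_{i-1}$ and $f_i$, a region of definite thickness contained in $H_t^c$ for every $t$: nothing collapses, so neither the expanding hole theorem nor the disconnection argument of Lemma \ref{conv_smth} (which concerns several sheets converging to the \emph{same} component with multiplicity at least two) is triggered. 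The contradiction in the paper is instead purely topological and runs through Lemma \ref{fun_triv}, which you never invoke: with three or more parallel ends, some interior slab of $H_t^c$ is sandwiched between two regions of $H_t$, and a loop in that slab encircling the connected core of $M_t$ cannot bound a disk in $H_t^c$ (any such disk would have odd mod-2 intersection with a properly embedded line in $H_t$ joining the two sandwiching regions of $H_t$), so $\pi_1(H_t^c)\neq 1$, contradicting Lemma \ref{fun_triv}. Your forward-limit aside does not repair this: it bounds the number of planes in the forward limit, which can be empty (again the bowl soliton), and hence does not bound the number of ends of $M_t$.
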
 

\begin{pf} 
By the mean convexity assumption we observe the domain $H_t^c$ is decreasing in terms of set inclusion as we consider $t$ becoming more and more negative, or in other words $H_t^c \subset H_0^c$ for all $t$ negative. We note that by using planes as barriers within the solid cylinders of lemma \ref{handlebody} that the number of ends of $M_t$ cannot decrease in finite time as we consider $t \to -\infty$ (if there were such a time, run the flow forward and use collections of planes intersecting the solid half cylinders as barriers to gain a contradiction -- no noncompact maximum principles are required as the cross sections of solid cylinders are compact), although as the bowl soliton shows this may occur in the limit and in fact the backwards limit can even be empty. If $M_t$ has two or more ends this issue doesn't occur, however. If $M_t$ has two or more ends, we may consider properly embedded lines $\gamma(s,t) \subset M_t$ between any two of them, in that as $s \to -\infty$ $\gamma(s,t)$ lays on one end and as $s \to \infty$ $\gamma(s,t)$ lays on another. As $M_t$ is smooth such curves are defined for all $t$, and by lemma \ref{handlebody} and the mean convexity of $M_t$ any such choice of embedded line between two different ends must intersect $H_0^c \cap B(0, R)$, $R >> 0$, for all times $t<0$ giving that $M_{-\infty}$ is nonempty. Applying a similar argument within each of the solid half cylinders from lemma \ref{handlebody} one can also see that $M_{-\infty}$ will also have at least as many ends as $M_t$ does, although it could conceivably have more. 
$\medskip$

 Continuing to suppose $M_t$ has two or more ends, because $M_{-\infty}$ is a minimal surface of finite total curvature the ends of it are parallel by \cite{JM} (see also \cite{MY}), by which we mean $M_{-\infty}$ is ambiently isotopic to connect sums of parallel planes perhaps along with some compact surface: see figure 1 (also, figure 1 in \cite{MY}). This then implies that the ends of $M_t$ are parallel in the same sense. One can then readily see that if the number of ends is strictly greater than 2 then $\pi_1(H_t^c)$ is not trivial, giving a contradiction to lemma \ref{fun_triv2}. 
\end{pf}

\begin{rem} \textbf{(Alternate to lemma above)} We point out that the statement above is actually stronger than we need to prove the theorem; it suffices to know that in the case that $M_t$ has two or more ends then $M_{-\infty}$ is nonempty and has a connected component with at least two ends. To see this, suppose that $M_t$ has two or more ends. Then from lemma \ref{fun_triv2} $\pi_1(H_t)$ must be nontrivial. Considering a homotopically nontrivial curve in $\gamma \subset H_0$, because going backwards in time $H_t$ is an expanding domain it makes sense to consider $\gamma \subset H_t$ for $t < 0$. If $M_{-\infty}$ is either empty or only contains components with one end which must be disjoint planes (since $M_{-\infty}$ has genus zero and finite total curvature) then $\gamma$ bounds a disc $D \subset H_{-\infty}$. After a potential slight perturbation of $D$ away from $M_{-\infty}$ then by the local smooth convergence of $M_t \to M_{-\infty}$ for possibly very negative but finite $t$ $D \subset H_t$, giving a contradiction since the flow is smooth so $\gamma$ should stay homotopically nontrivial. Note the main result does imply the lemma above, a posteori. 
\end{rem}

 \begin{figure}
\centering
\includegraphics[scale = .5]{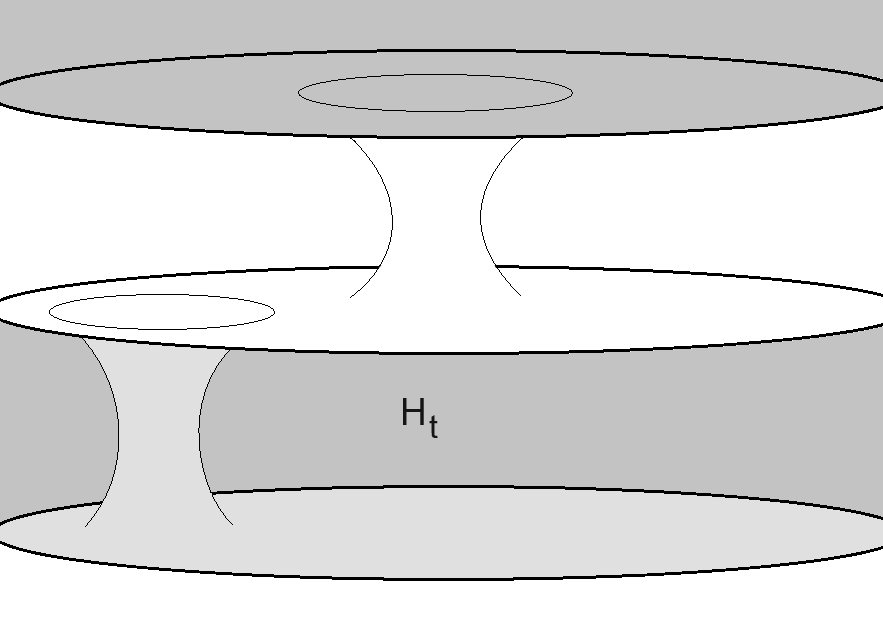}\label{parfig}
\caption{ \small This figure illustrates a potential configuration of $M_t$ if it were to have more than 2 ends. Since the flow is mean convex and flows to either the empty set or union of planes, the top ``neck'' would have to collapse contradicting the smoothness of the flow.}
\end{figure}

\begin{proof}[Proof of theorem \ref{mainthm}] If the number of ends of $M_t$ is two, then from the argument of lemma \ref{2ends} above $M_{-\infty}$ is nonempty and has at least two ends. By the halfspace theorem \cite{HM} either $M_{-\infty}$ is a single nonflat minimal surface or consists of a disjoint union of flat planes. Because $M_{-\infty}$ has finite total curvature it is homeomorphic to a genus $g$ surface with $k$ punctures for its $k$ ends -- note that the number of ends could imaginably increase in the limit so that $k > 2$, otherwise we would be done by, for instance, \cite{Sch}. In fact, as a side remark we point out it can be written in a very concrete way as a compact surface of genus $g$ connected to a sum of parallel planes, up to isotopy (see figure 1 in \cite{MY}). Now, because the convergence of $M_t \to M_{-\infty}$ is smooth in compact domains by lemmas \ref{farpast} and \ref{conv_smth} we immediately get a contradiction if $g \geq 1$ to lemma \ref{fun_triv2}. Since $g = 0$ $M_{-\infty}$ is a punctured sphere so has genus 0 and, by the theorem of Lopez and Ros \cite{LR}, must be a catenoid.  
$\medskip$

In the first, nonflat, case above then by lemma \ref{fun_triv} $M_t$ must flow out of the catenoid in the manner indicated in the introduction, so we next rule out the latter case arguing similar to in the remark above. In the second case we see that $H_{-\infty}$ contains no homotopically nontrivial curves. On the other hand, by the set monotonicity of the flow $H_t \subset H_{-\infty}$ for all $t \in \R$, and in our case by lemma \ref{2ends} $H_t$ does have such curves for all $t$; denote such a curve by $\gamma \subset H_0$ and by $D$ the immersed disc contained in $H_{-\infty}$ it bounds. By lemmas \ref{farpast} and \ref{conv_smth} the convergence of $H_t \to  H_{-\infty}$ is smooth in compact domains, which implies that $D$, up to a slight perturbation, is contained in $H_t$ for $t$ sufficiently negative as well, giving a contradiction because $\gamma$ must remain homotopically nontrivial in $H_t$ since the flow is smooth. Alternately the proof of lemma \ref{2ends} implies at least two ends of $M_{-\infty}$ must be connected, ruling out this case.
$\medskip$

Now, since $M_t$ is eternal by the comparison principle it is noncompact so has at least one end, and if the number of ends is just one then we see from the above that $M_t$ must be topologically a plane. We discuss this case more in the concluding remarks immediately below.   \end{proof}

\section{Concluding remarks} First, we further discuss the case left from above that $M_t \simeq \R^2$. Under the finite total curvature type condition this set is nonempty, considering the bowl soliton, but there are some hints to suggest there are none under a finite total curvature assumption of the form ``there is some constant $C > 0$ so that $\int_{M_t} |A|^2 < C$ for all $t < 0$.'' With this assumption (or one that implies this) to proceed it seems natural to consider the blowdown of $M_t$. By this we recall one parabolically rescales $M_t$ about the origin by a sequence of scales tending $\lambda_i \to 0$. Denoting the resulting flows by $M^{\lambda_i}_t$, by the finite entropy assumption as in the section above one may employ Brakke's compactness theorem (considering the flows as Brakke flows) to extract a limiting Brakke flow which, by Huisken's monotonicity formula, must be a self shrinker. As discussed in \cite{IP}, its support will be smooth although the convergence may be with multiplicity greater than one and there are examples where this is the case: for instance, considering the catenoid as an eternal flow whose blowdown is the plane with multiplicity 2. 
$\medskip$

Since $M_t$ is mean convex the blowdown limit as well must be mean convex, so by the finite entropy assumption and \cite{CM} must either be a flat plane or round cylinder (or round sphere, which is ruled out by the noncompactness of $M_t$). In the later case, using that the total curvature is scale invariant we find a contradiction since the round cylinder has infinite total curvature (in terms of integral of $|A|^2$, not $|K|$ of course). The issue, at least from this line of attack, is if the blowdown is a plane. Indeed,  its blowdown limit cannot be a flat plane of multiplicity one -- if this were the case then by Huisken's monotonicity the flow $M_t$ itself must be a flat plane, contradicting that we assume $H > 0$ strictly along the flow. It seems that ruling out the case the blowdown limit is a plane of multiplicity 2 or higher though is more delicate, but to the author's knowledge the known examples of such flows diffeomorphic to the plane, such as the grim reaper cylinder and the $\Delta$--wings (see \cite{HIMW, BLT1}), have infinite total curvature. There are also more exotic convex eternal flows which aren't translators, constructed in \cite{BLT}. 
$\medskip$

Note that in the above no claims of uniqueness are made for the eternal solution up to the catenoid it flows out of -- it seems quite plausible that such a solution is unique though. For instance, it is shown in \cite{CCMS} that there was only one ancient rescaled mean curvature flow which laid to one side (a priori not even shrinker mean convex) of an asymptotically conical self shrinker, using the Merle--Zaag ODE lemma. There are a number of issues one needs to address though, for instance how well an eternal solution $M_t$ can be shown to converge back in time to its catenoid $C$ -- consider lemma 7.18 in \cite{CCMS}. We also mention that in \cite{MP} some uniqueness results for suitably symmetric eternal mean convex flows were shown -- see theorem 1.5 therein. In a nutshell, the assumptions quickly imply that such a flow must asymptote to a catenoid as $t \to -\infty$, and using uniqueness results for the curve shortening flow we can show an ``asymptotic uniqueness'' statement.
$\medskip$

As discussed already in the introduction, if we relax our hypotheses of mean convexity we then must include all minimal surfaces of finite topology, the space of which is comparatively quite large with varied topology. In this space there may also be topologically interesting translators of finite total curvature. By \cite{Khan} such translators must have asymptotically planar ends and with this in mind one could imagine, for instance, the existence of a ``sideways translating annulus'' of finite total curvature, although these have recently been conjectured to not exist \cite{HMW}. If we consider relaxing our assumption of finite total curvature type there are clearly weaker assumptions which will allow us to take a backwards limit and get something smooth, although possibly not of finite total curvature depending on the manner the assumptions were loosened which played a role in the argument. There is also the question of generalizing this result to higher dimensions, which is reasonable to ask because the reapernoid of \cite{MP} was constructed for all $n \geq 2$, although we point out that in a number of steps (for instance in taking the backwards limit) that we were in the surface case was strongly used.


\begin{thebibliography}{9}

\bibitem{BLT} Theodora Bourni, Mat Langford, and Giuseppe Tignalia. \textit{Ancient solutions to mean curvature flow out of polygons.} To appear in Geom. Topl. 

\bibitem{BLT1} Theodora Bourni, Mat Langford, and Giuseppe Tignalia. \textit{On the existence of translating solutions of mean curvature flow in slab regions} Analysis and PDE, 13-4, 1051--1072, 2020.

\bibitem{CHH} Kyeongsu Choi, Robert Haslhofer and Or Hershkovits. \textit{Ancient low-entropy flows, mean-convex neighborhoods, and uniqueness}. Acta Math. 228(2):217--301, 2022.


\bibitem{CCMS} Otis Chodosh, Kyeongsu Choi, Christos Mantoulidis, and Felix Schulze. \textit{Mean curvature flow with generic initial data}. Preprint, arXiv:2003.14344 

\bibitem{CM} Tobias Colding and William Minicozzi. \textit{Generic mean curvature flow I: generic
singularities.} Ann. Math. 175 (2012), no. 2, 755–833.


\bibitem{FCS} Doris Fischer--Colbrie and Richard Schoen. \textit{The structure of complete stable minimal surfaces in 3-manifolds of nonnegative scalar curvature.} Comm Pure Appl. Math. 33 (1980) 199-211. 

\bibitem{Cos} Celso José da Costa. \textit{Example of a complete minimal immersion in} $\R^3$ \textit{of genus one and three embedded ends}. Bull. Soc. Bras. Mat. 15 (1984), 47–54


\bibitem{Han} Yongheng Han. \textit{Ancient mean curvature flows from minimal hypersurfaces.} Preprint, arXiv:2311.15278


\bibitem{HK0} Robert Haslhofer and Bruce Kleiner. \textit{Mean curvature flow of mean convex hypersurfaces}. Comm. Pure Appl. Math. 70(3):511--546, 2017.

\bibitem{HK} Robert Haslhofer and Bruce Kleiner. \textit{Mean curvature flow with surgery}. Duke Math. J., Volume 166, Number 9 (2017), 1591-1626.

\bibitem{Hat} Allen Hatcher. \textit{Notes on Basic 3--manifold topology}. 

\bibitem{HMW} David Hoffman, Francisco Martin and Brian White. \textit{Annuloids and }$\Delta$\textit{--wings.} To appear in Adv. N.S.

\bibitem{HIMW} David Hoffman, Tom Ilmanen, Francisco Martin, and Brian White. \textit{Graphical Translators for Mean Curvature Flow.} Calc. Var. PDE, Vol 58, Number 117 (2019).

\bibitem{HM} David Hoffman and  William Meeks III. \textit{The strong halfspace theorem for minimal surfaces.} Invent. Math 101, 373-377 (1990) 

\bibitem{HM1} David Hoffman and  William Meeks III. \textit{Embedded minimal surfaces of finite topology}. Ann. of Math. 131 (1990), 1–34.

\bibitem{IP} Tom Ilmanen. \textit{Singularities of mean curvature flow of surfaces.} Preprint. 

\bibitem{JM} Luquesio Jorge and William Meeks III. \textit{The topology ofcomplete minimal surfaces of finite total Gaussian curvature}. Topology 22 (1983), 203-221. 

\bibitem{Khan} Ilyas Khan. \textit{The structure of translating surfaces with finite total curvature.} Calc. Var. PDE, Vol 62, Number 104 (2023).

\bibitem{LR} Francisco Lopez and Antonio Ros. \textit{On embedded complete minimal surfaces of genus zero.} J. Differential Geom. 33 (1991), 293-300. 

\bibitem{Mant} Carlo Mantegazza. \textit{Lecture notes on mean curvature flow.} Progress in Mathematics, 290, Birkhauser/Springer Basel AG, Basel, 2011. 

\bibitem{MY} William Meeks III and Shing-Tung Yau. \textit{The topological uniqueness of complete minimal surfaces of finite topological type}. Topology 31 (1992), 305-315. 

\bibitem{MP} Alexander Mramor and Alec Payne. \textit{Ancient and Eternal Solutions to Mean Curvature Flow from Minimal Surfaces.} Math. Ann., 380 (2021), no. 1, 569-591.


\bibitem{Sch} Richard Schoen. \textit{Uniqueness, symmetry, and embeddedness of minimal surfaces.} J. Differential Geom., 18(4):791–809
(1984)

\bibitem{LS} Leon Simon. \textit{Lecture notes on geometric measure theory.} Online. 

\bibitem{Sun} Ao Sun, \textit{Local Entropy and Generic Multiplicity One Singularities of Mean Curvature Flow of Surfaces.} J. Differential Geom., 124(1): 169-198 (2023).

\bibitem{Ton} Yoshihiro Tonegawa. \textit{Brakke's mean curvature flow: an introduction.} SpringerBriefs in Mathematics. Springer, Singapore, 2019.


\bibitem{W2} Brian White. \textit{Complete surfaces of finite total curvature.} J. Differential Geom 26 (1997), 315--326. 

\bibitem{W} Brian White. \textit{The size of the singular set in mean curvature flow of mean-convex sets}.  J. Amer. Math. Soc. 13 (2000), 665-695. 





\end{thebibliography}
\end{document}